\newtheorem{theorem}{Theorem}
\newtheorem{lemma}[theorem]{Lemma}
\newtheorem{corollary}[theorem]{Corollary}
\newtheorem{proposition}[theorem]{Proposition}
\theoremstyle{definition}
\newtheorem{definition}[theorem]{Definition}
\theoremstyle{remark}
\newtheorem{open}{Open Problem}
\renewcommand{\P}{\mathrm{P}}
\newcommand{\NP}{\ensuremath{\mathrm{NP}}}
\newcommand{\PV}{\mathrm{PV}}
\newcommand{\LPV}{L_{\PV}}
\newcommand{\N}{\mathbb{N}}
\newcommand{\true}{\forall \PV(\N)}
\newcommand{\PLS}{\mathrm{PLS}}
\newcommand{\CPLS}{\mathrm{CPLS}}
\newcommand{\APC}{\mathrm{APC}}
\newcommand{\sWPHP}{\mathrm{sWPHP}}
\newcommand{\rWPHP}{\mathrm{rWPHP}}
\def\GI{\mathrm{GI}}
\def\LI{\mathrm{LI}}
\def\LLI{\mathrm{LLI}}
\newcommand{\APPROX}{\mathrm{APPROX}}
\newcommand{\Comp}{\mathrm{Comp}}
\newcommand{\eb}[2]{\exists #1 \! < \! #2 \,}
\newcommand{\ab}[2]{\forall #1 \! < \! #2 \,}
\newcommand{\ang}[1] { \langle #1 \rangle }
\newcommand\prob{\mathop{\rm Pr}\nolimits}
\newcommand{\bn}{\bar n}
\newcommand{\R}{\mathcal{R}}
\newcommand{\Rpq}{\R_{p,q}}
\let\phi\varphi
\let\epsilon\varepsilon
\newcommand{\ignore}[1]{}
\newcommand{\changed}[1]{#1}
\begin{document}

\title{Approximate counting and NP search problems}

\author{Leszek Aleksander Ko{\l}odziejczyk\footnote{Institute of Mathematics, University of Warsaw, \texttt{lak@mimuw.edu.pl}}  ~and Neil~Thapen\footnote{Institute of Mathematics of the Czech Academy of Sciences, \texttt{thapen@math.cas.cz}}}

\maketitle

\begin{abstract}
We study a new class of NP search problems, those which can be proved total 
\changed{using standard combinatorial reasoning 
based on approximate counting. 
Our model for this kind of reasoning is the bounded arithmetic theory  $\APC_2$  of [Je\v{r}\'{a}bek 2009].}
In particular, the Ramsey and weak pigeonhole search problems lie in the \changed{new }
class.
We give a purely computational characterization of this class and show that,
relative to an oracle,
it does not contain the problem CPLS, a strengthening of PLS.

As CPLS is provably total in the theory $T^2_2$, this shows that 
$\APC_2$ does
not prove every $\forall \Sigma^b_1$ sentence which is provable in bounded arithmetic.
This answers the question posed in 
[Buss, Ko{\l}odziejczyk, Thapen 2014] and represents some progress in the 
programme of separating the levels of the bounded arithmetic hierarchy by low-complexity sentences.

Our main technical 
tool is an extension of the ``fixing lemma'' from [Pudl\'{a}k, Thapen 2017], a form of switching lemma,
which we use to show that a random partial oracle from a certain distribution will, with high 
probability, determine an entire computation of a $\P^\NP$ oracle machine.
\changed{The introduction to the paper is intended to make the statements and context of the results accessible 
to someone unfamiliar with NP search problems or with bounded arithmetic.}
\end{abstract}

\section{Introduction} \label{sec:introduction}

An \emph{NP search problem} is specified by a polynomial-time relation
$R(x,y)$ and a polynomial $p(x)$. Given an input $x$, a solution to the problem
is any $y$ such that~$R(x,y)$ holds and~$|y|<p(|x|)$ (where $|x|$ is the length 
of a string $x$). 
We only consider \emph{total} problems, where a solution is guaranteed to exist for all~$x$.
The class of all such problems is called TFNP, standing 
for \emph{total functional}~NP~\cite{TFNP_1991}.

Subclasses of TFNP are sometimes described as consisting
of all search problems which can be proved to be total by some particular 
combinatorial lemma or style of argument~\cite{TFNP_1991, papadimitriou_1994}.
For example, the class PPA ``is based on the lemma that every graph has 
an even number of odd-degree nodes" \cite{beame_et_al}.
Often, the particular lemma or argument can be represented
by a specific axiomatic theory.
In  this paper we study the class, which we call APPROX, of problems 
\changed{that can be proved total using reasoning based on \emph{approximate counting}.
Our model of this kind of reasoning is 
the theory $\APC_2$ developed by Je\v{r}\'{a}bek  in~\mbox{\cite{jerabek_approx,jerabek_hash}},
which}
provides machinery to count the size of a set
well enough to distinguish between sets of size $a$ and $(1+\epsilon)a$,
for $a$ given in binary (but not between sets of size $a$ and~$a+1$),
and to formalize a certain amount of induction in this language.
In this way it can carry out the standard proofs of,
for example, the finite Ramsey theorem 
and the tournament principle~\cite{jerabek_hash}.
\changed{We give a non-logical characterization of APPROX, as 
the class of TFNP problems reducible in a certain sense to a version
of the weak pigeonhole principle for $\P^\NP$ functions.
Examples of problems in the class are natural problems associated with the
finite Ramsey theorem, the usual weak pigeonhole principle, and the ordering principle.}

\changed{Our main result is that, in the relativized setting,
a search problem known as CPLS~\cite{CPLS} is not in APPROX.
Here CPLS is a natural strengthening of 
a complete problem for the class PLS of search problems 
(\emph{polynomial local search}, see Section \ref{subsec:tfnp} below).}

\changed{This work is mainly motivated by an open problem in logic. Our result} 
answers a question about a hierarchy of theories 
collectively known as bounded arithmetic. 
For each $i \in \N$, the theory~$T^i_2$
is axiomatized by induction for formulas at level~$i$ in the polynomial hierarchy.
It is a long-standing open problem 
whether the theories $T^i_2$ can be separated
by sentences expressing that various NP search problems are total --
known as $\forall \Sigma^b_1$ sentences. In other words:
does the class of provably total NP search problems
get strictly bigger as $i$ increases? This is open for $i \ge 2$.

$\APC_2$ lies between~$T^1_2$ and~$T^3_2$.
In~\cite{BKT2014} we pointed out that
the NP search problems typically used in arguments
separating $T^1_2$ from $T^i_2$ for $i \ge 2$
could be proved total using approximate counting.
This led us to state the following open problem, 
which is an important special case
of the more general one: is there any $i$ such 
that $T^i_2$ proves the totality of more 
NP search problems than $\APC_2$ does?

Our result \changed{
in this paper implies that the answer is yes. The totality of $\CPLS$,
which is provable in $T^2_2$,  is not provable in $\APC_2$,
and thus $T^3_2$ proves strictly more NP search problems total than $\APC_2$ does.
This
} makes $\APC_2$ one of the strongest 
natural theories that has been separated from theories 
higher up in the bounded arithmetic hierarchy --
in fact, from $T^i_2$ for the lowest possible~$i$ --
in terms of NP search problems. Intuitively speaking, the
conclusion is that the power of $T^2_2$, $T^3_2, \ldots$ 
to prove many NP search problems total
is based on more than just a limited ability to count. 

\changed{While our motivation is from logic, 
an important part of the methods we use are 
complexity-theoretic,
and may be independently interesting to complexity theorists.}
Our main technical tool is the ``fixing lemma'' from \cite{RRR}.
\changed{This is related to the \emph{switching lemma} of  H{\aa}stad \cite{hastad},
which is used in complexity to separate depth~$d+1$ circuits from depth~$d$ circuits.
The fixing lemma is a simplified,
but more widely applicable, version of  this result.}
It shows that a random partial assignment can, with high 
probability, determine the value of a~CNF. 
We strengthen it slightly, to show that a random partial oracle 
can determine an entire computation of a $\P^\NP$ oracle machine.
The proof of
our version is almost identical, and many definitions are identical, to what appears in~\cite{RRR}.
\changed{In the more technical parts of Sections 4-6} will assume the reader has access to that paper.

\medskip

In the rest of this introductory section we give an overview
of bounded arithmetic and the theory $\APC_2$, describe
the structure of TFNP from this point of view,
and outline how we handle relativization and reductions.
The paper is then structured as follows.

\newcommand{\sct}[1]{\textit{Section~{#1}}.}

\sct{\ref{sec:results}}
We define  $\CPLS$
and our search-problem class
\mbox{APPROX}, formally state our main 
Theorems~\ref{lem:APC_2_characterize} and~\ref{thm:cpls-not-in-ourclass},
obtain some corollaries, 
and give an outline of the proofs. 

\sct{\ref{sec:logic}}  We prove Theorem~\ref{lem:APC_2_characterize},
that the class APPROX
captures the~$\forall \Sigma^b_1$ consequences of~$\APC_2$.
\changed{This section contains the technical work in logic.}

\sct{\ref{sec:fixing}} We \changed{state and} 
prove our version of the fixing lemma.

\sct{\ref{sec:non-reducibility}}  We use the fixing lemma to 
show that a $\P^\NP$ computation is determined by a random partial oracle,
and derive 
Theorem~\ref{thm:cpls-not-in-ourclass}, that CPLS is not in APPROX.

\sct{\ref{sec:propositional}}  We briefly sketch
an alternative way to prove our main result about bounded arithmetic,
\changed{that $\APC_2$ does not prove the totality of CPLS,} by going through propositional
proof complexity rather than NP search problems. 

\sct{\ref{sec:open-problems}}
 We mention some open problems.

%
%
%
%
%

\paragraph{Acknowledgements.}
The first author was partially supported first by grant 2013/09/B/ST1/04390 
and then by grant 2017/27/B/ST1/01951 of the National Science Centre, Poland.
The second author was partially supported \changed{
by GA \v{C}R project 19-05497S}.
Part of this research was carried out during the first author's visit to Prague \changed{in 2017},
funded by  \changed{ERC grant 339691}. 
The Institute of Mathematics of the Czech Academy of Sciences is supported by RVO:67985840.

\changed{We are grateful to Pavel Pudl\'{a}k and Pavel Hub\'{a}\v{c}ek for discussions about this work.}

\subsection{Bounded arithmetic}

Fix a language 
$\LPV$ containing a symbol for every function or relation computed by
a polynomial-time machine. Then \changed{a total} NP search problem 
\changed{can be identified with} 
a true $\LPV$ sentence  of the form
$
\forall x \, \eb{y}{t(x)}R(x,y),
$
where $R$ is a polynomial-time relation, $t$ is a polynomial-time function,
and  $x$ and~$y$  range over natural numbers written in binary notation.
Let $T$ be any sound theory. 
The set of such sentences provable in $T$ then defines a class
of search problems.
For the class to have some reasonable properties, 
$T$ should not be too weak; and to get classes of
the kind usually studied in complexity theory, it should not be too strong.

Natural \changed{examples of suitable} theories $T$ come from bounded arithmetic,
which has close ties to computational complexity. For the purposes of this paper,
we will take such theories to be given by a \emph{base theory} fixing some basic
properties of the symbols of $\LPV$, 
together with one or more axiom schemes that
allow us to do stronger kinds of reasoning, typically induction.
All axioms are universal closures of 
\emph{bounded formulas}, that is, formulas in which all quantifiers 
appear in the form $\ab{x}{t}$ or $\eb{x}{t}$.

In more detail, a \emph{$\PV$ formula} is a quantifier-free formula 
of $\LPV$. 
A \emph{$\Sigma^b_i$ formula} is one of the form
\[
\eb{x_1}{t_1(\bar{z})} \ab{x_2}{t_2(\bar{z}, x_1)} \dots \, \phi(\bar{z}, \bar{x})
\]
where $\phi$ is a $\PV$ formula, the bounds \changed{$t_j$} are $\LPV$-terms, quantifiers
may appear in alternating $\exists$ and $\forall$ blocks, and there are at most $i$ blocks. 
Such formulas define precisely the $\Sigma^p_i$ relations, that is, those at level $i$ in the
polynomial hiearchy. 
The~$\Pi^b_i$ formulas are defined dually. 
The \emph{universal closure} of a formula $\phi(\bar{z})$ is the sentence
$\forall {\bar{z}} \,\phi(\bar{z})$. Given a class of formulas $\Gamma$, we  write
 $\forall\Gamma$ for the set of universal closures of formulas from $\Gamma$.
\changed{Thus, for instance, a $\forall \PV$ sentence states that some polynomial-time
computable property holds for all inputs.}

We will consider two base theories, both containing only $\forall \PV$ sentences.
The first and more usual one is the theory $\PV$, \changed{which 
comes from Cobham's characterization of the polynomial-time functions
as a function algebra \cite{Cook:PV, cobham}.
We will not define $\PV$ here, as the details are not important, but 
it can be thought of as a minimal theory in which all polynomial-time
functions are well-behaved.}

The second, \changed{stronger but simpler theory,} which we denote $\true$, consists 
simply of all $\forall \PV$ sentences
which are true under the standard interpretation in~$\N$. 
This is simpler to understand than $\PV$ \changed{because there is no
list of axioms to keep in mind, and also}
works more naturally for
defining NP search problems. Our results translate 
easily between the two, and a reader unfamiliar with bounded arithmetic
will not go very wrong by reading $\PV$ as $\true$ throughout -- 
see Subsection~\ref{subsec:truth-proof}.

The important family of theories $T^i_2$, for $i \ge 0$, is defined as
\[
T^i_2 := \PV + \Sigma^b_i\textrm{-}\mathrm{IND}
\]
where $\Sigma^b_i$-IND is the usual induction scheme for $\Sigma^b_i$ formulas with parameters.
$\PV$ already proves induction for quantifier-free formulas 
(as that sort of induction can be witnessed by polynomial-time binary search)
so in this setting $T^0_2$ is the same as $\PV$. We write $T_2$ for the union of this family.

We can now state a fundamental theorem. 
By the \emph{$\Sigma^b_i$-definable} functions of a theory we mean the 
\changed{ functions with $\Sigma^b_i$ graphs which the theory proves are total.}

\begin{theorem} [\cite{Buss:bookBA}] \label{the:buss_witnessing}
For $i \ge 0$, the $\Sigma^b_{i+1}$-definable functions of $T^i_2$ are precisely
the $\smash{\P^{\Sigma^p_i}}$ functions, that is, those that are polynomial-time computable 
with an oracle from level $i$ of the polynomial hierarchy.
\end{theorem}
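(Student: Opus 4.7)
The proof splits into two inclusions. For the easier direction, every $\P^{\Sigma^p_i}$ function can be $\Sigma^b_{i+1}$-defined in $T^i_2$. A computation of a polynomial-time machine $M$ with a $\Sigma^p_i$ oracle on input $x$ consists of a polynomial-length sequence of configurations where each transition is either a $\PV$ step or an oracle query, and each true oracle answer is certified by a $\Sigma^b_i$ witness while each false answer is certified by a $\Pi^b_i$ condition. The predicate ``$w$ is a halting computation of $M(x)$ with output $y$'' is therefore $\Sigma^b_{i+1}$: one existentially quantifies $w$ together with the witnesses for positive oracle queries, and verifies correctness by a $\Pi^b_i$ body. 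That $T^i_2$ proves totality follows because $\Sigma^b_i$-IND suffices to track, step by step, that the machine reaches a halting configuration.

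For the converse, the plan is to follow the standard Buss-style witnessing argument. Given a proof of $\forall x \, \exists y \, \phi(x,y)$ in $T^i_2$, first convert it to a sequent-calculus derivation and apply free-cut elimination so that every cut formula has complexity at most $\Sigma^b_i \cup \Pi^b_i$. Then proceed by induction on the resulting derivation, assigning to each sequent $\Gamma \Rightarrow \Delta$ a $\P^{\Sigma^p_i}$-computable function that, given witnesses for the $\Sigma^b_{i+1}$-existential formulas in the antecedent, returns a witness for some $\Sigma^b_{i+1}$-existential formula in the succedent (or returns nothing if some $\Sigma^b_i \cup \Pi^b_i$ formula in the succedent can be verified directly). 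Formulas of complexity at most $\Sigma^b_i$ or $\Pi^b_i$ require no explicit witness, since their truth value can be decided by a single $\Sigma^p_i$ query. The logical and structural rules are handled by routine composition of witnessing functions.

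The main obstacle is the $\Sigma^b_i$-IND rule: from premises $\Gamma \Rightarrow \phi(0), \Delta$ and $\Gamma, \phi(a) \Rightarrow \phi(a+1), \Delta$ with $\phi$ a $\Sigma^b_i$ formula, one concludes $\Gamma \Rightarrow \phi(t), \Delta$. Here, on input witnesses for $\Gamma$, the new witnessing algorithm queries the oracle to decide $\phi(t)$; if $\phi(t)$ holds, the conclusion is verified directly, otherwise it performs binary search on the interval $[0,t]$ using $\Sigma^p_i$ queries, locating either a counterexample already at $k = 0$ (in which case the first premise's witnessing function produces a witness for $\Delta$) or a transition point $k$ with $\phi(k) \land \neg \phi(k+1)$. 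Feeding $k$ into the inductive-step witness then yields a witness for $\phi(k+1)$, contradicting the oracle unless a witness for $\Delta$ is produced instead. The whole procedure uses only polynomially many $\Sigma^p_i$ queries, hence lies in $\P^{\Sigma^p_i}$. The bulk of the remaining technical work is bookkeeping: verifying that all rules, especially contraction and $\exists$-left, preserve polynomial bounds on witness sizes, and that free-cut elimination survives the presence of the $\Sigma^b_i$-IND rule.
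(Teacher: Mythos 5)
The paper does not prove this statement; it is cited as background from Buss's book, so there is no in-paper proof to compare against. Your proposal follows the standard Buss-style witnessing argument, which is indeed how this fact is established in the literature, and the hard direction is essentially right: free-cut elimination so that cut formulas are $\Sigma^b_i\cup\Pi^b_i$, witnessing functions by induction on the derivation, $\Sigma^b_i/\Pi^b_i$ side formulas decided by single $\Sigma^p_i$ oracle calls, and (the one genuinely new ingredient over the $S^i_2$ case) binary search on the length-$t$ interval to locate a failure point of the induction formula using only $|t|$ many $\Sigma^p_i$ queries, which is what keeps the algorithm polynomial time even though IND rather than LIND is used.

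The one place you should be more careful is the ``easy'' direction. You assert that ``$\Sigma^b_i$-IND suffices to track, step by step, that the machine reaches a halting configuration,'' but the natural statement one would induct on --- ``there exists a valid (pre)computation of length $j$'' --- is $\Sigma^b_{i+1}$, not $\Sigma^b_i$, because each NO reply carries a $\Pi^b_i$ correctness condition. So a naive step-by-step induction is not available in $T^i_2$. Two standard repairs: (a) prove totality in $S^{i+1}_2$ using $\Sigma^b_{i+1}$-LIND and then transfer to $T^i_2$ by Buss's $\forall\Sigma^b_{i+1}$-conservativity of $S^{i+1}_2$ over $T^i_2$; or (b) argue directly in $T^i_2$ by taking the lexicographically maximal answer string $r$ for which a precomputation with witnessed YES replies exists --- this maximum exists by $\Sigma^b_i$-MAX, which $\Sigma^b_i$-IND does give, and one then shows any NO reply in this $r$ must be correct, since otherwise flipping that bit to YES (and padding the tail with NO) would give a lexicographically larger admissible $r$. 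Either of these makes the sketch precise; as written, the claim glosses over exactly the point where the complexity bookkeeping matters.
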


We will also use a related family of theories $S^i_2$, for $i \ge 1$, which is defined by replacing
the $\Sigma^b_i\textrm{-}\mathrm{IND}$ scheme in $T^i_2$ with the apparently weaker scheme
$\Sigma^b_i\textrm{-}\mathrm{LIND}$ in which inductions can only run for polynomially many
steps (in the binary length of a parameter).
We have $T^i_2 \subseteq S^{i+1}_2 \subseteq T^{i+1}_2$~\cite{Buss:bookBA}.
Theorem~\ref{the:buss_witnessing} remains true if $T^i_2$ is replaced by $S^{i+1}_2$ and/or
the base theory $\PV$ is replaced by $\true$.

In addition to being related to computational complexity by Theorem~\ref{the:buss_witnessing},
bounded arithmetic is a natural environment in which to ask 
questions about the provability or consistency of theorems or conjectures from 
complexity theory. For recent examples see~\cite{pich:PCP, mueller_pich}.

We now make the above definitions slightly more complicated.
As in complexity theory,
we typically cannot expect to show that two theories
of bounded arithmetic
are distinct without either making some extra assumption or 
working relative to some oracle. We will use oracles.
We redefine $\LPV$ to include a unary relation symbol 
$\alpha$ standing for ``an arbitrary oracle'', and 
function and relation symbols for all polynomial-time 
machines with oracle access to~$\alpha$. Other formula
classes and theories are redefined to use this extended language.
In particular, $\true$ becomes the set of $\forall \PV$ sentences
which are true in $\ang{\N, A}$ for \emph{every} oracle $A$ 
interpreting the symbol $\alpha$.
Strictly speaking, we should change the names
to $\PV(\alpha)$, $\Sigma^b_i(\alpha)$, $T^i_2(\alpha)$ etc. 
However, since we never use the unrelativized 
versions, we simplify notation by keeping the old names.
The results mentioned above still hold.

\paragraph{An open problem.}
By adapting oracle separation results for the polynomial hierarchy, 
it has been shown that the strength of the (relativized) theories $T^i_2$ increases strictly with $i$:
for each $i$ there is a $\forall \Sigma^b_{i+1}$ sentence provable in $T^{i+1}_2$ but not in $T^i_2$
\cite{bk:boolean}.
A pressing open question in proof complexity\footnote{
This is essentially equivalent to a question in propositional proof complexity about separating 
bounded-depth Frege systems by formulas of fixed depth,
and in particular finding a family of small-width CNF's which have short refutations in bounded-depth Frege but
require long refutations in Res(log).}
is whether 
this remains true 
if we measure the strength of theories only by their $\forall \Sigma^b_k$ consequences for some fixed $k$,
in particular for $k=1$.

We write $\forall \Sigma^b_k(T)$ for the $\forall \Sigma^b_k$ consequences of a theory $T$.
\changed{In particular $\forall \Sigma^b_1(T)$ is a class of total NP search problems (as long as $T$ is sound).}
From \cite{krajicek:counterexample, chiari_krajicek} we know
that 
\[
\forall \Sigma^b_1(\PV) \subsetneq \forall \Sigma^b_1(T^1_2) 
\subsetneq \forall \Sigma^b_1(T^2_2)
\]
and from \cite{chiari_krajicek, ST_search_problems}
we know that for any~$i,k \ge 1$, 
\[
\textrm{if} \ 
\forall \Sigma^b_k(T^i_2) = \forall \Sigma^b_k(T^{i+1}_2)
\ \textrm{then} \ 
\forall \Sigma^b_k(T^i_2) = \forall \Sigma^b_k(T_2).
\]
The following  is open for $k \le 2$:
\begin{equation}\label{q:sep}
\textrm{does \ $\forall \Sigma^b_k(T^2_2) = \forall \Sigma^b_k(T_2)$ ?}
\end{equation}
The answer is expected to be negative, even for $k=0$, 
by analogy with the~$\Pi_1$ separation between I$\Sigma_i$
and I$\Sigma_{i+1}$ given by the second incompleteness theorem.
The case $k=1$ seems to be particularly approachable, as classes
$\forall \Sigma^b_1(T)$ have a natural 
computational interpretation in terms of 
NP search problems.

\paragraph{Approximate counting.} 
Je\v r\'abek~\cite{jerabek_approx, jerabek_hash}
developed a bounded arithmetic theory for approximate counting. Following~\cite{BKT2014} we call this theory%
\footnote{
Our definition is slightly different from Je\v r\'abek's in~\cite{jerabek_hash},
which uses a variant of the surjective weak pigeonhole principle with a smaller difference between domain and range. 
However, the theories prove the same  $\forall \Sigma^b_2$ statements, which is all that matters for this paper.}
$\APC_2$ and define it as $T^1_2$ together with the 
\emph{surjective weak pigeonhole principle} (sWPHP)
for $\P^\NP$ functions, which asserts that no such function can be a surjection from~$n$ to~$2n$, for any~$n>0$.
$\APC_2$ can formalize many arguments in finite combinatorics that use approximate
counting, such as the standard proofs of the finite Ramsey theorem and the tournament principle, as well as some probabilistic reasoning.
It lies between~$T^1_2$ and~$T^3_2$ in strength,
as this instance of the weak pigeonhole principle is provable in $T^3_2$.

In~\cite{BKT2014} we asked the analogue of question (\ref{q:sep})
for $\APC_2$ in place of~$T^2_2$. More specifically:
\begin{equation}\label{q:sep-apc}
\textrm{does \ $\forall \Sigma^b_1(\APC_2) = \forall \Sigma^b_1(T_2)$  ?}
\end{equation}
We expected the answer to be ``no'', but the opposite did not seem completely implausible.
Approximate counting is a powerful tool in finite combinatorics, and typical
combinatorially natural examples of hard $\forall \Sigma^b_1$ statements \changed{that have been}
used to separate $T^1_2$ from $T_2$ 
were known to be provable in $\APC_2$~\cite{BKT2014}.
Moreover it was shown in~\cite{BKZ2015}, by formalizing Toda's theorem, 
that all of bounded arithmetic collapses to the analogue
of $\APC_2$ if we add a parity quantifier to the language.  

Both \cite{BKT2014} and later \cite{atserias_thapen} showed
 unprovability results for various natural  subtheories of $\APC_2$, 
but these fell well short of answering (\ref{q:sep-apc}).
In fact, they 
were obtained using a $\forall\Sigma^b_1$ sentence that is actually provable in $\APC_2$.

\subsection{TFNP}\label{subsec:tfnp}

\changed{As already discussed,} in our language
a total NP search problem is simply  a 
true $\forall \Sigma^b_1$ sentence, 
that is, one of the form
$\forall x \, \eb{y}{t(x)} R(x,y)$
where $R(x,y)$ is a $\PV$ formula and $t$ is \changed{an} $\LPV$-term.
This represents the search-task of finding a witness $y$, given $x$. We will often assume that the bound
$y<t(x)$ is implicit in $R(x,y)$, 
and we will usually write $R(x,y)$ or just $R$
as a name for the search problem.

As before, polynomial-time is defined relative to an
oracle symbol $\alpha$, 
and we will occasionally use notation like $R(x,y; \alpha)$ 
to emphasize the specific oracle being used.
The oracle leads to a slight complication in what we mean when we say a 
search problem is total: $\forall x \, \eb{y}{t(x)} R(x,y; \alpha)$ must be true in $\ang{\N, A}$ for \changed{\emph{every}} oracle $A$ interpreting $\alpha$. 
This behaviour is essentially
the same as what is called a total type-2 NP search problem in e.g.~\cite{beame_et_al, buss-johnson}.

We define two important notions of reducibility between search problems~$Q$ and~$R$.
We will introduce one more in Subsection \ref{subsec:apc-tfnp}.

\begin{definition} \label{def:many_one_reduction}
$Q(x,y)$ is \emph{polynomial-time many-one reducible}, or simply \emph{reducible},
to $R(x',y')$, written $Q \le R$,
 if there are polynomial-time functions $f$ and~$g$ and a polynomial-time relation $P$ 
(all of which may query the oracle $\alpha$) such that
\[
R(f(x), y'; P(x,\cdot)) \rightarrow Q(x, g(x,y'); \alpha) 
\]
holds for all $x$, $y'$ and $\alpha$, where $P(x, \cdot)$
represents the oracle $\{z:P(x,z)\}$.  
Two problems are \emph{equivalent} if they are reducible to one another.
\end{definition}

\begin{definition}
$Q(x,y)$ is \emph{polynomial-time Turing-reducible} to $R(x',y')$ if there is a polynomial-time relation $P$ and
a polynomial-time oracle machine $M$ which, on input $x$, makes a series of (adaptive) queries
to $R(x',y';P(\ang{x,x'},\cdot))$. If all replies are correct,
then $M$ outputs  some $y$ such that $Q(x,y;\alpha)$. 
\end{definition}

We are interested in search problem classes corresponding to bounded
arithmetic theories,
\changed{in the sense that the class captures the search problems
proved total by the theory (but see 
Section~\ref{subsec:truth-proof} below).}
 There has been a research programme,
motivated partly by the logical separation question discussed above,
to characterize these classes. 

\begin{itemize}[leftmargin=18pt]
\item 
$\PV$ corresponds to $\mathrm{FP}$, the class of search problems which can be solved
in deterministic polynomial time~\cite{Cook:PV, Buss:bookBA}.
\item 
$T^1_2$ corresponds to $\PLS$~\cite{PLS, bk:boolean}. 
A $\PLS$ problem is given by polynomial-time
\emph{neighbourhood} and \emph{cost} functions $N_x$ and $C_x$
and \emph{domain} predicate~$F_x$, 
such that 
$0 \in F_x$ and if $y \in F_x$, then $|y| \le |x|^k$ for some fixed $k$.
A solution to an instance $x$ is any~$y \in F_x$ such that either $N_x(y) \notin F_x$
or $C_x(N_x(y)) \ge C_x(y)$. Such a $y$ exists because costs cannot decrease indefinitely.
A complete problem for the class is to find a local minimum 
for a function on a bounded-degree graph.
\item 
$T^2_2$ corresponds to $\CPLS$~\cite{CPLS}, a generalization of $\PLS$ described below.
\item 
For $k \ge 1$, $T^k_2$ corresponds to a class $\GI_k$ defined by
the $k$-turn game induction principle~\cite{ST_search_problems}
(see also \cite{BeckmannBuss:localSearch, BeckmannBuss:definableSearch}).
\changed{Roughly speaking, a complete problem for $\GI_k$ is:
given a sequence of $k$-round 2-player games, winning strategies
for opposite players in the first and last games, and reductions between
neighbouring games in the sequence, find an error in one of the strategies
or one of the reductions.}
Equivalent search problems include further generalizations of $\PLS$
and principles about feasible Nash equilibria~\cite{PudlakThapen:alternating}, 
and~$\LLI_k$, the $k$-round linear local improvement principle~\cite{KNT:localimprove}.
\end{itemize}

The theory $T^i_2$ is equivalent to a natural formalization of 
``every $\P^{\Sigma^p_i}$ machine has a computation on every input'',
essentially by Theorem~\ref{the:buss_witnessing}.
The search problems above can thus be thought of as the
 projections onto TFNP 
of increasingly strong computation models. 
This can be taken further: there are
two  ``second-order'' bounded arithmetic
theories, $U^1_2$ and $V^1_2$, 
which are equivalent (with respect to their~$\forall \Sigma^b_1$ consequences)
to similar statements about computations of, respectively, PSPACE and EXPTIME 
machines~\cite{Buss:bookBA, KNT:localimprove}.
In terms of NP search problems, 
from~\cite{KNT:localimprove, BeckmannBuss:localImprove} we have:
\begin{itemize}[leftmargin=18pt]
\item 
$U^1_2$ corresponds to 
$\LLI_{\log}$, the linear local improvement 
principle with polynomially many rounds.
\footnote{
Unfortunately $\LLI_{\log}$ is rather complicated to describe or use. 
The authors believe that it is equivalent to the simpler game induction
principle $\GI$, which is like the principle $\GI_k$ 
of~\cite{ST_search_problems} but with polynomially many rounds.}
\item
$V^1_2$ corresponds to  $\LI$, the local improvement principle. 
\end{itemize}
One can think of the classes 
$\mathrm{FP} \subseteq \GI_1 \subseteq \GI_2  \subseteq \dots  \subseteq \LLI_{\log}  \subseteq \LI$ 
as forming a backbone for TFNP,
\changed{arising natural from the hierarchy of bounded
arithmetic theories or of computation models -- see Figure~\ref{fig:TFNP}}.
This  could be extended even beyond bounded arithmetic, 
to say $\forall \Sigma^b_1(\text{PA})$ or 
$\forall \Sigma^b_1(\text{ZFC})$~\cite{beckmann:peano, tabatabai:flows},
or by using potentially stronger systems of reasoning, as in~\cite{goldberg_papadimitriou}.

\begin{figure}
\centering

\begin{tikzpicture}

\node at (1,5) {PPA};
\node at (3,5) {PPP};
\node at (2,4) {PPAD};
\draw [thick, ->] (4.6,5.8) -- (1.35,5.2);
\draw [thick, ->] (4.8,5.8) -- (3.35,5.2);
\draw [thick, ->] (1.2,4.75) -- (1.8,4.25);
\draw [thick, ->] (2.8,4.75) -- (2.2,4.25);
\draw [thick, ->] (2, 3.75) to [out=-90,in=180] (4.65, 0);

\node at (5,0) {FP};
\node at (5,1) {PLS $\equiv \GI_1$};
\node at (5,2) {CPLS $\equiv \GI_2$};
\node at (5,3) {$\GI_3$};
\node at (5,4) {$\GI_4$};
\node at (5,6) {$\LLI_{\log}$};
\node at (5,7) {$\LI$};

\draw [thick, ->] (5,0.75) -- (5,0.25);
\draw [thick, ->] (5,1.75) -- (5,1.25);
\draw [thick, dotted, ->] (5,2.75) -- (5,2.25);
\draw [thick, dotted, ->] (5,3.75) -- (5,3.25);
\draw [thick, dotted, ->] (5,4.65) -- (5,4.25);
\draw [thick, ->] (5,5.75) -- (5,5.25);
\draw [thick, dotted, ->] (5,6.75) -- (5,6.25);

\node at (7, 1.1) {PWPP};
\node at (8.35,1.1) {HOP};
\node at (10,1.1) {RAMSEY};
\draw [thick, ->] (7, 0.85) to [out=-100,in=0] (5.35, 0);
\draw [thick, ->] (8.25, 0.85) to [out=-125,in=0] (5.35, 0);
\draw [thick, ->] (9.5, 0.85) to [out=-140,in=0] (5.35, 0);
\draw [thick, ->] (6.05, 1.8) to (6.8, 1.35);
\draw [thick, ->] (6.05, 1.9) to (8.1, 1.35);

\node at(8.5, 2.5) {APPROX};
\draw [thick, ->] (8.3, 2.25) to (7.1, 1.35);
\draw [thick, dotted, ->] (8.7, 2.25) to (8.35, 1.35);
\draw [thick, dotted, ->] (9.1, 2.25) to (9.7, 1.35);
\draw [thick, ->] (5.35, 2.95) to (7.8, 2.7);
\draw [thick, ->] (7.9, 2.25) to (5.65, 1.3);


\end{tikzpicture}
\caption{
{A diagram showing some inclusions between selected classes in TFNP, 
including our  results. Some classes are named by their
complete problems. Solid arrows are strict inclusions, relative to some oracle. 
Dotted arrows are inclusions not known to be strict. For separations among PPA, PPP, PPAD, PLS 
see~\cite{beame_et_al, morioka, bureshop_morioka, buss-johnson}.
For other references see Sections~\ref{subsec:tfnp} and~\ref{subsec:apc-tfnp}.
CPLS separates $\GI_3$ from APPROX and also separates $\GI_2$ from HOP and PWPP.
WEAKPIGEON separates APPROX from~PLS.
HOP separates APPROX from PWPP. 
Solid arrows from PPP to HOP~\cite{BKT2014} and from PPP to PWPP have been omitted for readability.
}}
\label{fig:TFNP}
\end{figure}

However, experience suggests\footnote{
A related issue in propositional proof complexity is the difficulty 
of finding  candidates for separating
the Frege and extended Frege proof systems~\cite{ABBCI}.} 
that it is difficult to find any natural ``combinatorial'' NP search problem
that is not already provably total in~$U^1_2$, and thus reducible to~$\LLI_{\log}$.
In particular, $U^1_2$ is strong enough to formalize the counting arguments
needed to prove the totality of complete problems for the well-known classes PPA and PPP
introduced in~\cite{papadimitriou_1994}. 
Thus, all problems in those classes are reducible to~$\LLI_{\log}$.

On the other hand the bijective pigeonhole principle, 
called OntoPIGEON in the search problem literature, 
is a complete problem for the class PPAD~\cite{bureshop_morioka}
which is contained in both PPA and PPP.
Standard proof-complexity lower bound arguments for the pigeonhole principle~\cite{KPW:PHP, PBI:PHP}
show that this problem is not provably total in any theory $T^k_2$ and not reducible to any $\GI_k$.

Finally let us mention the search problem class PWPP~\cite{jerabek_roots},
based on the injective weak pigeonhole principle \changed{search problem WEAKPIGEON}, which
is contained in $\GI_2$ and PPP but not in PLS~\cite{mpw:wphp, krajicek:counterexample}. 
We will discuss \changed{this and} two other  search problems, RAMSEY and HOP,
in Section \ref{sec:results}.

It is open whether the hierarchy 
$\GI_2 \subseteq \GI_3 \subseteq \dots$ is strict. 
This is essentially the same problem as the separation of $\forall \Sigma^b_1(T^{i+1}_2)$ from
$\forall \Sigma^b_1(T^{i}_2)$ discussed above.

\subsection{True and provable reductions}\label{subsec:truth-proof}

In the previous section, we did not explicitly say what it means for a 
search problem class to correspond to a theory $T$. 
The obvious meaning, that the class is precisely $\forall \Sigma^b_1(T)$,
potentially has a problem. Namely, such a class does not have the desirable property of
being closed under many-one reductions, unless the reductions
work provably in $T$. There may even be two $\PV$ formulas
$R_1$ and $R_2$ which ``semantically'' define the same relation on~$\N$,
and thus the same search problem by the usual complexity-theoretic definition,
but are such that~$T$ proves that one is total but not the other.

There are two natural ways around this issue. One is to define our class
as the closure of $\forall \Sigma^b_1(T)$ under many-one reductions. The other is to
stick to theories $T$ that contain the set $\true$ of all true $\forall \PV$ sentences,
and exploit the fact that the statement that a reduction works is such a sentence.

The next lemma shows that,
for theories of the kind we consider, these two approaches have the same result.
In this paper we prefer the second one.

\begin{lemma} [folklore, see also \cite{hanika}]\label{lem:folklore}
Let $Q(x,y)$ be an NP search problem.
Let~$T$ be a bounded arithmetic theory containing $\PV$
and with axioms closed under substituting polynomial-time relations for oracles. 
Then (1) and (2) below are equivalent. 
If~$T$ contains $S^1_2$, then (3) is also equivalent. 
\begin{enumerate}
\item
$Q$ is provably total in $T+ \true$.
\item
$Q \le R$ for some TFNP problem $R$ provably total in $T$.
\item 
$Q$ is Turing reducible to a TFNP problem $R$ provably total in $T$.
\end{enumerate}
\end{lemma}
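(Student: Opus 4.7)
The plan is to establish (1)$\Leftrightarrow$(2) in general, and then (2)$\Leftrightarrow$(3) whenever $T \supseteq S^1_2$. The implication (2)$\Rightarrow$(3) is immediate, since a many-one reduction is a one-query Turing reduction.

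For (2)$\Rightarrow$(1), the correctness of the reduction,
$R(f(x), y'; P(x,\cdot)) \to Q(x, g(x,y'); \alpha)$,
is the universal closure of a $\PV$ formula true under every oracle interpretation of $\alpha$, so it belongs to $\true$. Combined with totality of $R$ provable in $T$---applied after using closure of $T$'s axioms under substituting polynomial-time relations for oracles, so that $R$ with oracle $P(x,\cdot)$ is still total---this gives totality of $Q$ in $T + \true$.

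The main step is (1)$\Rightarrow$(2). Assume $T + \true \vdash \forall x \, \eb{y}{t(x)} Q(x,y;\alpha)$. By compactness, finitely many sentences $\phi_j := \forall \bar z\, \psi_j(\bar z;\alpha) \in \true$ (with each $\psi_j$ a $\PV$ formula) suffice for the proof, so $T$ proves
$\forall x \, [\eb{y}{t(x)} Q(x,y;\alpha) \vee \bigvee_j \exists \bar z\, \neg\psi_j(\bar z;\alpha)]$.
Define a relation $R'(x,y';\alpha)$ that holds iff $y'$ codes either a valid $Q$-witness $y < t(x)$, or else a pair $(j,\bar z)$ with $\neg\psi_j(\bar z;\alpha)$. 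Then $T$ proves the (a priori unbounded) statement $\forall x\, \exists y'\, R'(x,y';\alpha)$, and since $R'$ is quantifier-free and $T$ is a bounded theory, Parikh's theorem produces an $\LPV$-term $s$ such that $T \vdash \forall x\, \eb{y'}{s(x)} R'(x,y';\alpha)$. Hence $R'$ is a TFNP problem provably total in $T$. The reduction $Q \le R'$ sends $x$ and $\alpha$ through unchanged and lets $g$ project the $y$-component from $y'$; it is correct because any $y'$ of counterexample form would force $R'(x,y';\alpha)$ to be false under every oracle interpretation (since each $\phi_j$ belongs to $\true$), so a solution can only be of the first form.

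For (3)$\Rightarrow$(1) with $T \supseteq S^1_2$, I would simulate the polynomial-time oracle machine $M$ witnessing the Turing reduction inside $T + \true$. At each of polynomially many steps, totality of $R$ in $T$ (again using substitution closure) supplies an oracle reply; $\Sigma^b_1$-replacement, provable in $S^1_2$, then collects all replies into a single $\Sigma^b_1$-witnessed sequence. The statement ``if every reply is a valid $R$-solution, then $M$'s output witnesses $Q$'' is a further universally closed $\PV$ sentence in $\true$, which closes the simulation and yields totality of $Q$.

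The principal obstacle is the invocation of Parikh's theorem in (1)$\Rightarrow$(2): the counterexample variables $\bar z$ are a priori unbounded, so without a polynomial bound on the combined witness $y'$ one does not obtain a genuine TFNP problem. A related subtlety is that the ``counterexample'' branches of $R'$ are unsatisfiable under every actual oracle (precisely because $\phi_j \in \true$), which is exactly what makes the reduction trivially correct on those branches.
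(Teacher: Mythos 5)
Your plan follows essentially the same route as the paper: the same decomposition into the three implications, the same use of compactness plus Parikh's theorem for (1)$\Rightarrow$(2) (the paper folds the finitely many true $\forall\PV$ sentences into a single $\phi$ and takes $R(x,y):=\neg\phi(y)\vee Q(x,y)$ with the identity reduction, but that is a cosmetic difference from your disjoint-union encoding $R'$), and the same observation for (2)$\Rightarrow$(1) that the correctness statement of the reduction is itself a true $\forall\PV$ sentence while substitution closure gives totality of the oracle-modified $R$.

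The one place where your sketch is not quite right is (3)$\Rightarrow$(1). You propose to obtain the oracle replies one at a time and then use $\Sigma^b_1$-replacement (BB$\Sigma^b_1$) to collect them into a single witnessed sequence. But replacement assembles witnesses for a family of \emph{independent} $\Sigma^b_1$ conditions indexed by $i<n$; here the queries are adaptive, so the $(i+1)$-st query is not even defined until the reply to the $i$-th is fixed, and there is no single $\Sigma^b_1$ predicate $\phi(i,y)$ to which replacement applies. What is actually needed is to build the computation string $w$ query by query by an induction of polynomial length, i.e.\ $\Sigma^b_1$-LIND (equivalently $\Sigma^b_1$-PIND): the inductive invariant is ``there exists a partial computation of $M$ of length $i$ in which every reply is a correct $R$-solution.'' Since $T\supseteq S^1_2$ proves $\Sigma^b_1$-LIND, the rest of your argument then goes through, and the final step---that a full computation with correct replies yields a $Q$-witness---is, as you say, a true $\forall\PV$ sentence. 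So the gap is a choice of the wrong induction-like principle rather than a wrong idea, but as written the replacement step would not compile into a proof.
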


\ignore{
\textbf{[Cut or move somewhere else]}
{\it [In  light of Lemma~\ref{lem:folklore}, we will focus on proving results
for theories including $\true$. 
This seems more natural for dealing with TFNP classes
and lets us avoid all technicalities related to the definition of $\PV$.
Moreover, our main result, which is an unprovability/non-reducibility statement,
is actually stronger in the presence of $\true$.
All the other results could be reformulated
as (slightly more cumbersome) 
statements about standard bounded arithmetic theories
in a routine way.]} }

\begin{proof}
Suppose (1) holds. We have that
$T + \forall z \, \phi(z) \vdash \forall x\,  \exists y \, Q(x,y)$
for some $\PV$ formula $\phi$ such that $\forall z \, \phi(z) \in \true$. Hence
$T \vdash \, \forall x \, [\exists z \, \neg \phi(z) \vee \exists y \, Q(x,y)]$.
Since $T$ is a bounded theory, by Parikh's theorem~\cite{parikh} we may add some
term $t(x)$ bounding both existential quantifiers.
Therefore $T \vdash \forall x \, \eb{y}{t(x)} R(x,y)$
where $R(x,y)$ is the formula $\neg \phi(y) \vee Q(x,y)$.
Now~$R$ is an NP search problem, provably total in~$T$, and
$Q$ is reducible to $R$ in $\N$ using the identity function, 
since~$\phi(y)$ is true for every $y$ and every oracle. Hence (2) holds.

Now suppose (2) holds. Then (3) is immediate. 
For (1), from the definition of a reduction,
 there are $\PV$ function symbols $f,g$ and a relation symbol $P$ such that, for every oracle $A$,
\[
\ang{\N,A} \vDash \forall x \, \forall y'\, [R(f(x), y'; P(x, \cdot)) \rightarrow Q(x, g(x,y'); A)].
\]
Now define the search problem $R^*(x', y'; \alpha) := R(x', y'; P(x, \cdot))$.
Then $\true$ proves 
$\forall x' \, \exists y' \, R^*(x',y') \rightarrow \forall x \, \exists y \, Q(x,y)$,
and by the property of closure under subsitution for the oracle, $T$
proves that $R^*$ is total. Hence we have (1). 

Lastly we show that (3) implies (1) under the stronger assumption.
Turing reducibility means that there is a polynomial-time oracle machine $M$
which, on input $x$, makes oracle queries to $R$ and, if the replies
are correct, outputs $y$ such that $Q(x,y)$.
Formally, for every $A$, $\ang{\N,A} \vDash \forall x\, \forall w \, \phi(x,w)$
for a $\PV$ relation  $\phi(x,w)$ expressing that: if 
$w$ is a computation of $M$ on input~$x$, and every oracle query $x'$
in $w$ has a reply $y'$ in $w$ such that $P(x',y')$, then 
$Q(x, \mathrm{output}(w))$.
But~$T$ proves that, for all $x$, such a $w$ exists, 
since $\Sigma^b_1$-LIND is enough to construct~$w$ query by query.
Hence $T + \true \vdash \forall x\, \exists y\, Q(x,y)$.
\end{proof}

\section{Main definitions and results} \label{sec:results}

\subsection{Coloured polynomial local search}

We study a search problem introduced in \cite{CPLS}.
We will need several results about it from~\cite{RRR}, so we take the definition verbatim from there.

Let $a,b,c$ be parameters.
Consider a levelled directed graph
whose nodes consist of all pairs $(i,x)$ 
from~$[0,a) \times [0,b)$.
We refer to~$(i,x)$ as \emph{node~$x$ on level~$i$}. If $i<a-1$, this node has a single
neighbour in the graph, node~$f_i(x)$  on level $i+1$. Every node in the graph is coloured with 
some set of colours from~$[0,c)$.
The principle CPLS, \emph{coloured polynomial local search}, 
says that the following three statements cannot all be true:
\begin{enumerate}[(i)]
\item
Node $0$ on level $0$ has no colours.
\item
For every node $x$ on every level $i < a-1$, and for every colour $y$, 
if the neighbour $f_i(x)$ of $x$ on level~\mbox{$i+1$} has colour $y$, 
then $x$ also has colour $y$.
\item
Every node $x$ on the bottom level $a-1$ has at least one colour, $u(x)$.
\end{enumerate}
When the parameters $a,b,c$ are universally quantified, CPLS is expressed as a $\forall \Sigma^b_1$
sentence about an oracle $\alpha$ encoding the functions $f_i$ and $u$ and a 
predicate~$G$, where $G_i(x,y)$ means ``node~$x$ on level~$i$ has colour~$y$''. 

To describe it explicitly as a search problem: the inputs are the parameters~$a,b,c$
and a solution is a witness that one of items (i)-(iii) above fails. That is,
a colour $y$ such that $G_0(0,y)$;
or a node $(i,x)$ and a colour $y$ such that $G_{i+1}(f_i(x), y) \wedge \neg G_i(x,y)$;
or a node $(a-1,x)$ such that $\neg G_{a-1}(x, u(x))$.

To see that the principle is true, or equivalently that the search problem is total,
suppose that (i)-(iii) hold simultaneously.
Then we can reach a contradiction by arguing inductively on $i$
that for all $i$, some node on level $i$ has no colours. This argument can be formalized as a
proof of CPLS in $T^2_2$. Moreover, this has a kind of converse, in that it is shown in~\cite{CPLS} that 
CPLS is complete for the search-problem class $\forall\Sigma^b_1(T^2_2)$
 with respect to many-one reductions.
 
\changed{It is worth pointing out that CPLS is a generalization of PLS in the sense
that it simplifies to a \changed{PLS-complete problem} 
if the parameters are restricted in a certain way,
for instance if we fix the number of colours~$c$ to~1. 
In that situation, given $a,b$, we define the domain $F$ of the PLS problem by putting $(i,x) \in F$ if and only if $\neg G_i(x,0)$ holds,
that is, if $x$ does not have the unique possible 
colour~$0$ on level~$i$. 
For $(i,x) \in F$, let $N(i,x) := (i+1,f_i(x))$ and $C(i,x) := a - i$.}

\changed{Thus, one way of looking at CPLS is that 
the parameter $a$ is a bound on possible costs,
$b$ is a bound on the number of potential neighbours
 of a given element,
and $c$ is a bound on the number of potential reasons why an element
could fail to be in the domain. 
It can be shown that CPLS becomes a PLS problem 
whenever one of the three parameters is constrained to be
is at most polylogarithmic in the maximum of the two others.}

\subsection{Retraction WPHP and $\Sigma^p_2$ search problems}\label{subsec:apc-tfnp}

The \emph{retraction weak pigeonhole principle}~\cite{jerabek_hash}
asserts that, for $n >0$, given two functions 
$f\colon n \rightarrow 2n$ and $g \colon 2n \rightarrow n$
there must be some $v<2n$ such that $f(g(v)) \neq v$.
It is true, because otherwise simultaneously
$f$ would be a surjection and $g$ an injection. If $f$ and $g$ are polynomial time, 
this principle naturally gives rise to a problem in TFNP. 
We will be in a situation
where $f$ and $g$ are $\P^\NP$, and for this we will define a more complex kind of search problem.

\begin{definition}
A \emph{$\Sigma^p_2$ search problem}
is specified by a $\mathrm{coNP}$ relation $R(x,y)$ and a polynomial bound $q$
such that $\forall x \,\eb{y}{2^{q(|x|)}}\, R(x,y)$. We will often assume that
the bound $q$ is implicit in $R$ and will not write it.
The problem represents the search-task of finding such a $y$, given $x$.
\end{definition}

As this definition makes sense outside the context of bounded arithmetic, 
we have written it in standard complexity-theory notation.
But we could alternatively define a $\Sigma^p_2$ search problem 
as a true $\forall \Sigma^b_2$ sentence, in the style of
our syntactical definition of TFNP problems.

\changed{
A basic example of such a search problem is: for a fixed $\P^\NP$ machine~$M$,
given an input, find a computation of $M$ on this input. 
Here we assume
 that a computation includes
witnesses for all NP queries that get the answer YES, so that the property of being a 
(correct) computation is thus $\mathrm{coNP}$.
We specify precisely what we mean, as it will be important
in what follows.}

\begin{definition} \label{def:P_NP_computation}
We define  a $\Pi^b_1$ formula 
\emph{``$w$ is a computation of $M$ on input~$v$''}.
The formula interprets $w$ as a sequence $\bar{q}, \bar{r}, \bar{y}$
of respectively NP queries, YES/NO replies and witnesses to replies. It expresses that
\begin{enumerate}
\item\label{def:P_NP-part1}
For each $i$, $q_i$ is the $i$-th query asked by $M$ in a computation on input~$v$, assuming the previous replies were
$r_1,\ldots, r_{i-1}$,
\item\label{def:P_NP-part2}
For each $i$, if reply $r_i$ is YES then $y_i$ witnesses this,
\item\label{def:P_NP-part3}
For all sequences $\bar{z}$ of possible counterexamples, for each $i$, if reply $r_i$ is NO then
$z_i$ is not  a counterexample to this. 
\end{enumerate}
The machine only accesses the oracle $\alpha$ via
the NP queries. We say that $w$ is a \emph{precomputation of $M$ on input $v$}
if it satisfies \ref{def:P_NP-part1}.~and~\ref{def:P_NP-part2}.~above.
\end{definition}

Note that being a precomputation of $M$ on a given input is a $\PV$ formula, so it makes sense to speak 
of precomputations also when $\alpha$ is only partially defined (as long as the defined part is
large enough to verify~2. above). Note also that it is implicit in clause \ref{def:P_NP-part1}.~of the definition that each query asked in a computation
of $M$ depends only on the input and the previous YES/NO replies to queries, not on the witnesses to the previous replies.

\changed{We now give our main definitions.}

\begin{definition}
$\rWPHP_2$ is a class of $\Sigma^p_2$ search problems.
A problem in the class is specified by  $\P^\NP$ machines \changed{computing} functions
$f_x(u)$ and $g_x(v)$, where we treat one argument $x$
as a parameter. The functions $f_x$ and $g_x$ are constrained 
to take values less than $2x$ and $x$ respectively.  
An input to the problem is a size parameter
$x$. A solution is a pair \changed{$\ang{v,w}$} 
such that $v<2x$,
$w$ is a computation of $f_x(g_x(v))$ in the sense of Definition~\ref{def:P_NP_computation},
and the output of $w$ is not~$v$.
\end{definition}

\begin{definition} \label{def:PLS_counterexample}
An NP search problem $Q(x,y)$ is \emph{$\PLS$ counterexample reducible} 
to a $\Sigma^p_2$ search problem $R(x',y')$ if 
there is a $\PLS$ problem $P(x'', y'')$ and polynomial time
functions $d$ and $e$ with the following property:
for any~$x, y', y''$ such that $P(\ang{x, y'}, y'')$,
either
\begin{enumerate}
\item
$d(x, y'')$ witnesses that $R(e(x), y')$ is false, or
\item
$Q(x, d(x, y''))$.
\end{enumerate}
\changed{
As in Definition~\ref{def:many_one_reduction}, the oracle called by $R$ 
is allowed to be a polynomial-time variant of the oracle $\alpha$ called by $Q$.
Precisely, there is a polynomial-time relation~$A$ querying $\alpha$ such that,
 in the description  above, $R$  queries
$A(x, \cdot)$ as its oracle
rather than $\alpha$.
} 
\end{definition}

\begin{definition}
The search problem class $\APPROX$ consists of all NP search problems which are
$\PLS$ counterexample reducible to an $\rWPHP_2$ problem.
\end{definition}

\changed{The definition of PLS counterexample reducibility}
should be understood as follows. We are given $x$
and want to find $y$ such that $Q(x,y)$. We create an input $e(x)$
to $R$ and are given a purported solution $y'$ for which it is 
claimed that $R(e(x), y')$ --  
\changed{since $R$ is a $\Sigma^p_2$ search problem,} this is a coNP claim which we
cannot check directly. We then use $\ang{x, y'}$ as input for our
PLS problem $P$, and find a solution $y''$. Then either~1. or~2. above
holds, that is, either the coNP claim about $R$ was false and $d(x, y'')$
is a counterexample, or~$d(x, y'')$ is a solution to our original problem.

\changed{
As far as we know, this notion of reducibility has not been studied before.
We give some examples. Unfortunately, in the examples
we know which are relatively simple, one part or
another of the definition becomes trivial.
}

\changed{
Firstly, every PLS problem $Q(x,y)$ is PLS counterexample reducible
to the trivial $\Sigma^p_2$ search problem defined by letting
$R(x',y')$ hold for all $y' < 2^{|x'|}$. 
In the reduction, both $d$ and $e$ are the identity mapping
and $P(\ang{x, y'}, y'')$ holds exactly if $Q(x,y'')$ does.
}

\changed{
Now consider four search problems:
\begin{enumerate}
\item
The $\Sigma^p_2$ problem TOURNAMENT: 
given $x$ and a binary oracle $\alpha$ representing a tournament on the set of vertices $[0,x)$,
find $y$ coding a set of at most $\lceil \log x \rceil$ vertices
which dominates the tournament, that is, coding vertices
$y_1, \dots, y_k$ such that for all $z<x$ we have $\alpha(y_i, z)$ for some~$i$.
\item
The TFNP problem HOP (the \emph{Herbrandized
ordering principle}\footnote{
\emph{Herbrandized} refers to the presence of the predecessor function~$h$
which  turns the task of finding the $\preccurlyeq$-smallest 
element of $[0,x)$, which is naturally a $\Sigma^p_2$ problem, 
into a TFNP problem. The \emph{generalized iteration principle} of~\cite{chiari_krajicek}
is similar in spirit, as is the \emph{graph ordering principle} in the propositional proof complexity literature.
}~\cite{BKT2014}): 
given $x$ and oracles for a binary relation $\preccurlyeq$ and a unary function~$h$, 
find either a witness that $\preccurlyeq$ restricted to $[0,x)$ is not a total ordering,
or a witness that $h$ is not the $\preccurlyeq$-immediate predecessor function on $[0,x)$.
\item
The TFNP problem RAMSEY:
given $x$ and an oracle $R$ for a graph on~$[0,x)$,
find $y$ encoding a set $s \subseteq [0,x)$ of cardinality $\lfloor\log x/2 \rfloor$ such that~$[s]^2$ is homogeneous with respect to~$R$.
\item
The TFNP problem WEAKPIGEON\footnote{This is different in a non-essential way
from how this problem is defined in~\cite{jerabek_roots}, where it takes as input a circuit
for the function $g$.}:
given $x$ and an oracle $g$ for a function from $[0,2x)$ to $[0,x)$,
find $v,v'<2x$ such that $v \neq v$ and $g(v) = g(v')$.
The class of problems that are reducible to WEAKPIGEON was given the name PWPP in~\cite{jerabek_roots}.
\end{enumerate}
}

\changed{
HOP is reducible to TOURNAMENT in the following sense. 
We are given a size $x$ and a binary 
relation $\preccurlyeq$ on $[0,x)$
for which we want to solve HOP.
Consider $\preccurlyeq$ as a tournament on $[0,x)$
and give it as input to TOURNAMENT. Suppose
$s=\{y_1, \dots, y_k\}$ is a purported solution to 
TOURNAMENT, with $k$ polylogarithmic in~$x$.
By polynomial-time search we can find either
a witness in $s$ that $\preccurlyeq$ is not a total ordering
(thus solving HOP),
or a $\preccurlyeq$-least element $y_i$ of $s$. 
In the second case, compute $y':=h(y_i)$.
If $y' \succcurlyeq y_i$, then $y_i$
is a solution to HOP. Otherwise, comparing $y'$ to each $y_j \in s$
will either give us a witness that $\preccurlyeq$ is not a total ordering or 
reveal that $y'$ witnesses that $s$ is not a correct solution to TOURNAMENT.
}

\changed{
The above is a description of a PLS counterexample reduction of HOP to 
TOURNAMENT. The function $e$ translating input to HOP into input to 
TOURNAMENT is the identity. We then compute from $s$ either a solution
to HOP or a witness that $s$ is not a solution to TOURNAMENT.
However we can do this computation  in polynomial time, while 
Definition~\ref{def:PLS_counterexample} more generally allows it to be done by a call to a PLS problem $P$
(assisted by a polynomial time ``decoding'' function~$d$).
}

\changed{Our final examples of PLS counterexample reducibility are related
to the class $\APPROX$.
We will show in Theorem~\ref{lem:APC_2_characterize} below that this class coincides with the class of
NP search problems that are provably total using approximate counting. This has the following consequence.
\begin{corollary}[of Theorem~\ref{lem:APC_2_characterize}]
\label{cor:things_in_APPROX}
The NP search problems $\mathrm{HOP}$, $\mathrm{RAMSEY}$ and $\mathrm{WEAKPIGEON}$ are in $\APPROX$.
Therefore they are $\PLS$ counterexample reducible to $\rWPHP_2$ problems.
\end{corollary}
\begin{proof}
The problems $\mathrm{RAMSEY}$~\cite{pudlak_ramsey, jerabek_hash},
$\mathrm{HOP}$~\cite{BKT2014} 
and WEAKPIGEON are all provably total in $\APC_2$.
(This is also true for a stronger version of HOP 
in which $\preceq$ is only required to be a partial ordering, not a total ordering~\cite{BKT2014}.)
\end{proof}}

\changed{We are only aware of a simple direct proof of the reduction in the case of WEAKPIGEON. 
Suppose we are given a polynomial-time function $g$ and a parameter $x$ as input. 
To solve WEAKPIGEON we want to find a collision in~$g$, which we interpret as a function from $2x$ to $x$.
Let a function $f$ be given by a $\P^\NP$ machine inverting~$g$, as follows. On input $u<x$,
$f$ queries whether $\eb{v}{2x} ( g(v)=u)$. If the answer is NO, $f$ gives up and outputs~$0$.
If the answer is YES, $f$ outputs the maximal such~$v$, found by binary search.
Consider the problem in $\rWPHP_2$ specified by $f$, $g$ and the parameter~$x$. 
Recall that a solution is a pair $\ang{v,w}$ such that $v<2x$,
$w$ is a computation of $f(g(v))$ in the sense of Definition~\ref{def:P_NP_computation},
and the output of $w$ is not~$v$.}

\changed{In the PLS counterexample reduction of WEAKPIGEON to this $\rWPHP_2$ problem,
the auxiliary functions $d$ and $e$ will be the identity.
The reduction procedure $P$ will once again be polynomial-time. 
To describe it, suppose we are given $v$ and a precomputation $w$
of $f$ on input $u = g(v)$, with output $v' \neq v$ 
(we can ignore the computation of $g(v)$, since $g$ is polynomial time). We want to either
find a collision in $g$, or a witness that $w$ is not 
a computation of $f$, that is, that some NO reply
recorded in $w$ is wrong.
}

\changed{
First suppose that $v'=0$ and was output by $f$ because the reply
to the first query ``$\eb{z}{2x} ( g(z)=u)$?" was NO. Then 
$v$ is a witness that this reply is wrong. 
Otherwise, $v'$ was found by binary search. In this case, we 
follow~$w$ until the first place where the binary search interval excludes $v$.
Then, if the interval is strictly below $v$, we can use $v$ to witness that 
the most recent NO reply was wrong. If it is above $v$, then this has to be the result of a YES answer, with a witness $v''>v$ recorded in $w$. Thus we have found a collision, since $g(v'')=g(v)=u$.
}

\medskip

\changed{
Finally let us record here a fact which appears in Figure~\ref{fig:TFNP}.
\begin{proposition}
Relative to an oracle, $\mathrm{HOP}$ is not reducible to $\mathrm{WEAKPIGEON}$
and therefore is not in $\mathrm{PWPP}$.
\end{proposition}
This is implicit in the proof
in~\cite{BKT2014}
that $T^1_2 + $iWPHP does not prove HOP is total. 
It explicitly follows from~\cite{mueller_riis}, which generalizes this
result and works over the stronger base theory
$\forall\PV(\N)$.}

\subsection{Results}\label{subsec:results}


\changed{Recall that  $\APPROX$ was  defined in the last
subsection as the set of 
NP search problems which are
$\PLS$ counterexample reducible to an $\rWPHP_2$ problem.}

\begin{theorem} \label{lem:APC_2_characterize}
$\forall \Sigma^b_1(\APC_2 + \true)$ = $\APPROX$.
\end{theorem}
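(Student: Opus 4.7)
The plan is to prove the two inclusions separately. For the easier direction, $\APPROX \subseteq \forall \Sigma^b_1(\APC_2 + \true)$, I would first show that every $\rWPHP_2$ problem is provably total in $\APC_2$. This uses the fact that the surjective and retraction versions of WPHP for $\P^\NP$ functions are equivalent over $T^1_2$: if $(v,w)$ were a hypothetical counterexample showing that $f_x \circ g_x$ is the identity on $[0,2x)$, then $g_x$ would be an injection into $[0,x)$, which together with sWPHP$(\P^\NP)$ produces a contradiction. Next, since $T^1_2 \subseteq \APC_2$ proves the totality of every PLS problem, a PLS counterexample reduction from $Q$ to any $\rWPHP_2$ problem can be formalized in $\APC_2$: run the PLS problem on the purported solution and either confirm $Q$ or extract a counterexample to the coNP claim that $R(e(x),y')$ holds. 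The fact that the reduction itself works is a $\forall \PV$ sentence, so it lies in $\true$, giving totality of $Q$ in $\APC_2 + \true$.

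For the harder direction, $\forall \Sigma^b_1(\APC_2 + \true) \subseteq \APPROX$, I would use a witnessing argument. Assume $\APC_2 + \true \vdash \forall x \, \eb{y}{t(x)} R(x,y)$. The first step is to replace sWPHP by rWPHP: by Jeřábek's analysis, the two principles are interprovable for $\P^\NP$ functions in $T^1_2$, so we may assume the proof uses only rWPHP instances. Second, because any finite proof uses only finitely many such instances, and because the rWPHP$_2$ class of problems is closed under direct products of $\P^\NP$ machines, we may compress them into a single rWPHP instance with auxiliary parameters. The $\true$ axioms can similarly be absorbed into the defining formulas, using that they remain true under arbitrary oracle substitutions. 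Third, I would invoke the standard PLS witnessing theorem for $T^1_2$, extended to allow one $\Sigma^b_2$ hypothesis as an oracle: the witnessing algorithm becomes a PLS procedure that, at one designated step, queries the rWPHP$_2$ oracle. If the returned pair $(v,w)$ is genuine, the rest of the PLS search produces a solution $y$ to $R(x,y)$; if the coNP condition on $(v,w)$ fails, the proof of the failure yields a PLS counterexample of the kind demanded by Definition \ref{def:PLS_counterexample}.

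The main obstacle will be the third step: setting up the PLS witnessing for $T^1_2$ in a way that tolerates a $\Sigma^b_2$ hypothesis and outputs exactly the structure of a PLS counterexample reduction. In particular, I must show that only a single adaptive call to rWPHP$_2$ is needed — the compression step above reduces multiple instances to one, but the witnessing must be arranged so that the algorithm does not branch on the coNP-unverifiable answer in a way that requires further rWPHP calls. Once the witnessing is organized so that all nondeterminism about the oracle answer is deferred to the final counterexample-or-solution decision, the structure of $\APPROX$ matches exactly. The remaining technical details, including the Parikh-style bounding of witnesses and the substitution arguments for $\true$, are analogous to those already used in the proof of Lemma \ref{lem:folklore}.
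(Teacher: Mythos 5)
Your easy direction ($\APPROX \subseteq \forall\Sigma^b_1(\APC_2 + \true)$) is essentially the paper's Lemma~\ref{lem:counterexample_to_APC}: show $\APC_2$ proves an instance of $\rWPHP_2$ total (via sWPHP and $T^1_2$), formalize the counterexample reduction, and appeal to the closure argument of Lemma~\ref{lem:folklore}. That part is fine.

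The hard direction has a genuine gap at exactly the step you flag as the ``main obstacle,'' and the missing idea is not a variant of PLS witnessing ``extended to tolerate a $\Sigma^b_2$ hypothesis as an oracle.'' What the paper actually does is apply \emph{two witnessing theorems at two different levels, in sequence}, and this is what makes the single-call structure of Definition~\ref{def:PLS_counterexample} fall out automatically. Concretely: keep sWHPH on the left, move its negation (a $\Sigma^b_2$ statement that some $\P^\NP$ function $F$ is a surjection $s\to 2s$, obtained after a WPHP amplification argument in the style of~\cite{thapen_WPHP_models} to collapse to a single instance) to the right, so the whole sentence is $\forall\Sigma^b_2$ over $S^2_2$. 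Buss's witnessing theorem for $S^2_2$ then produces a $\P^\NP$ function which, given $v$, returns $u$ with $F(u)=v$ (or a witness for one of the other disjuncts); this $u$-producing function \emph{is} the retraction $g$, so the rWPHP shape emerges from witnessing rather than being assumed up front. The second key move, which you do not have, is to rewrite the condition $f_s(g_s(v))=v$ as a $\Pi^b_2$ formula $\forall w\,[w$ a computation of $f_s(g_s(v)) \to \mathrm{output}(w)=v]$: once the $\P^\NP$ computation $w$ is quantified explicitly, the whole statement becomes $\forall\Sigma^b_1$ over $T^1_2$, and the \emph{ordinary} PLS witnessing theorem applies with no modification. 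The PLS solution then carries either a witness that some NO answer in $w$ is wrong (giving item 1.\ of Definition~\ref{def:PLS_counterexample}) or a witness to $Q$ (item 2.). Your plan of compressing instances via ``direct products'' and then extending PLS witnessing to query an rWPHP$_2$ oracle is not the mechanism, and it is unclear it can be made to work: a PLS procedure that queries a $\Sigma^p_2$ oracle adaptively would not obviously yield a reduction of the required one-shot shape, which is precisely the obstacle you acknowledge but do not resolve. The amplification-plus-$S^2_2$-witnessing step is what dissolves it.
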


\changed{In other words, APPROX captures the class of TFNP problems which
are provably total using approximate counting.}
This is proved in Lemmas~\ref{lem:APC_to_counterexample} and~\ref{lem:counterexample_to_APC}
in Section~\ref{sec:logic}, by applying standard witnessing techniques 
\changed{from bounded arithmetic} to the definition of the theory $\APC_2$.

\begin{theorem}\label{thm:cpls-not-in-ourclass}
$\CPLS$ is not in $\APPROX$.
\end{theorem}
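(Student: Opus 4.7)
The plan is to argue by relativized contradiction. Assume that $\CPLS$ is in $\APPROX$, so there are polynomial-time functions $d, e$, a $\PLS$ problem $P$ and an $\rWPHP_2$ problem witnessing a PLS-counterexample reduction from CPLS. I would construct a distribution on CPLS oracles under which this reduction must fail: for an input $x$ encoding parameters $a, b, c$, take the level functions $f_i$ and the bottom-level witness function $u$ uniformly at random, and choose the coloring predicate $G$ adversarially as in the CPLS lower bounds underlying~\cite{RRR}, so that any partial view of the oracle of size polynomial in $|x|$ has only vanishing probability of already pinning down a violation of (i)--(iii).

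Next, I would feed the reduction a genuine $\rWPHP_2$ solution. Since the composition $f_{e(x)} \circ g_{e(x)}$ maps $[0, 2N)$ into $[0, N)$ with $N := e(x)$, at least $N$ values of $v < 2N$ satisfy $f_{e(x)}(g_{e(x)}(v)) \neq v$. Applying the extended fixing lemma, I would exhibit a small partial restriction $\rho_1$ of $\alpha$, a specific such $v$, and a complete $\P^\NP$ computation $w$ of $f_{e(x)}(g_{e(x)}(v))$ whose output differs from $v$; then $y' := (v, w)$ is a genuine $\rWPHP_2$ solution under every extension of $\rho_1$. I would then simulate the $\PLS$ problem $P$ on input $\ang{x, y'}$ by greedy cost-decreasing descent, and use the fixing lemma iteratively to extend $\rho_1$ to a restriction $\rho_2$, still of polynomial size, that determines the local minimum $y''$ produced and the value of $d(x, y'')$.

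The reduction now forces one of (a) $d(x, y'')$ witnesses that $R(e(x), y')$ is false, or (b) $d(x, y'')$ is a CPLS solution for $x$. Option (a) is impossible because $(v, w)$ was built to be a genuine $\rWPHP_2$ solution under any extension of $\rho_1$, hence under $\rho_2$. For option (b), the restriction $\rho_2$ leaves most of the random CPLS oracle unrevealed, and by the design of the distribution the polynomially many candidate values $d(x, y'')$ cannot, with significant probability, hit any actual violation of (i)--(iii); averaging over the distribution then delivers an oracle under which the reduction fails, a contradiction.

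The principal technical obstacle is the extension of the fixing lemma to $\P^\NP$ computations: clause~\ref{def:P_NP-part3} of Definition~\ref{def:P_NP_computation} universally quantifies over potential counterexamples to NO replies, so showing that a small random restriction commits to the full validity of $w$ requires a careful switching-style argument layered on top of the one in~\cite{RRR}. A secondary difficulty is the bookkeeping required so that the cumulative restriction $\rho_2$ -- combining the fixings for $w$, for the $\PLS$ descent, and for the evaluation of $d$ -- stays small enough that enough randomness in $\alpha$ survives to rule out spurious CPLS solutions in the final step.
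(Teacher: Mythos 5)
Your high-level plan matches the paper's: assume a PLS counterexample reduction from CPLS to an $\rWPHP_2$ instance, use a random partial oracle and the extended fixing lemma to hand the reduction a pair $(v,w)$ that looks like a valid $\rWPHP_2$ solution, then show the reduction cannot deliver a CPLS witness. You also correctly identify the two technical loads: extending the fixing lemma to $\P^\NP$ computations, and controlling the size of the restriction over the course of the PLS phase. Two issues are worth flagging.

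First, a conceptual point: you describe $w$ as ``a complete $\P^\NP$ computation'' and say $(v,w)$ is ``a genuine $\rWPHP_2$ solution under every extension of $\rho_1$.'' That is not quite available. A small restriction cannot certify the universally-quantified clause~3 of Definition~\ref{def:P_NP_computation}, so $w$ can only be a \emph{precomputation}. What the fixing lemma actually buys is that no \emph{legal} extension of $\rho$ can exhibit a counterexample to any NO reply in $w$. This is weaker, and it forces the Adversary to confine herself to legal extensions for the rest of the game --- which, conveniently, is also exactly what she must do to avoid ever revealing a CPLS witness.

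Second, and more seriously: your plan to ``use the fixing lemma iteratively to extend $\rho_1$ to a restriction $\rho_2$, still of polynomial size, that determines the local minimum $y''$'' cannot work as stated. The PLS descent may have exponentially many steps, and each step touches fresh oracle bits, so there is no single polynomial-size $\rho_2$ that determines the whole descent. The point you are missing is that the reduction need only be defeated against a \emph{bounded-memory Prover}: because a PLS local search uses only polynomial space, the Prover at any moment holds only polynomially many oracle bits, and he may forget bits. The paper formalizes this as a Prover--Adversary game (Lemma~\ref{lem:P_A_game}) in which the Adversary does not pre-commit to any $\rho_2$; instead she maintains, at each round, some legal $\sigma\supseteq\rho$ consistent with the Prover's \emph{current} memory, and she is free to change $\sigma$ as the Prover forgets. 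Since every $\sigma$ is legal it never witnesses CPLS, and by the choice of $\rho$ it never witnesses a faulty NO reply in $w$, so the Prover can never win. Replacing your ``fixed $\rho_2$'' step with this online game argument is what closes the gap; without it the bookkeeping problem you flag as a ``secondary difficulty'' is in fact unresolvable.
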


This is proved in Section~\ref{sec:non-reducibility}, using a lemma
about random oracles proved in Section~\ref{sec:fixing}.
We briefly sketch the proof.
We first fix an alleged $\PLS$ counterexample reduction of $\CPLS$
to a problem from $\rWPHP_2$ specified by a pair of $\P^{\NP}$ functions 
$f$ and $g$, 
then choose a large size parameter $n$
and use it to set suitable values for the parameters $a,b,c$ of $\CPLS$.
We define a notion of a ``legal'' partial oracle, which in particular is one
which does not contain any witness to $\CPLS$.
We adapt a lemma on random restrictions from~\cite{RRR} to show
that with high probability a random partial oracle $\rho$ from a certain 
distribution will ``fix'' YES or NO replies to all $\NP$ queries made in a 
$\P^{\NP}$ computation,
in the sense that these replies will never become wrong in any legal 
extension of~$\rho$ (Lemma~\ref{lem:fixed_computations}).
It follows that most partial oracles $\rho$ from this distribution 
will fix computations of $(f \circ g)(v)$ in this sense
on most inputs $v$. This is enough for $\rho$ to determine a solution \changed{$\ang{v,w}$}
to our instance of $\rWPHP_2$ for which it is difficult to find a
counterexample (Lemma~\ref{lem:good-rho}).
Finally we again adapt a proof from~\cite{RRR} to show, by an Adversary argument
in which the Adversary's strategy
uses only legal extensions of~$\rho$, 
that our PLS reduction
is not able to find a witness to CPLS from $\ang{v,w}$.

\changed{Our} main result about bounded arithmetic, answering the question 
posed in~\cite{BKT2014}, is an immediate consequence of Theorems~\ref{lem:APC_2_characterize} 
and~\ref{thm:cpls-not-in-ourclass}:

\begin{corollary} \label{cor:CPLS_not_APC2}
The principle $\CPLS$ is not provable in $\APC_2$. Since it is provable in $T^2_2$, 
it follows that, in the relativized setting, 
$\APC_2$ does not prove all $\forall \Sigma^b_1$ consequences of full bounded arithmetic $T_2$.
\end{corollary}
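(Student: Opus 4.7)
The plan is to combine the two main theorems directly via a contrapositive argument. Suppose, for contradiction, that $\CPLS$ were provable in $\APC_2$. Since $\APC_2 \subseteq \APC_2 + \true$, $\CPLS$ would then also be provable in $\APC_2 + \true$, so as a $\forall \Sigma^b_1$ sentence it would belong to $\forall \Sigma^b_1(\APC_2 + \true)$. By Theorem~\ref{lem:APC_2_characterize}, this class equals $\APPROX$, hence $\CPLS \in \APPROX$, contradicting Theorem~\ref{thm:cpls-not-in-ourclass}. This yields the first assertion.

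For the second assertion I would invoke the observation already made in Subsection~2.1: the straightforward inductive argument that, under hypotheses (i)--(iii), some node on each level~$i$ has no colours, is a $\Sigma^b_1$ induction on~$i$, and so can be carried out in $T^2_2$. In particular $\CPLS$ is a $\forall \Sigma^b_1$ sentence provable in $T^2_2$, and hence in $T_2 \supseteq T^2_2$. Combined with the first part, $\CPLS$ witnesses that
\[
\forall \Sigma^b_1(T_2) \;\not\subseteq\; \forall \Sigma^b_1(\APC_2),
\]
which is the desired conclusion.

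There is essentially no technical obstacle at this stage; all of the work has been absorbed into Theorems~\ref{lem:APC_2_characterize} and~\ref{thm:cpls-not-in-ourclass}. The only mildly nontrivial point is the passage from ``not provable in $\APC_2 + \true$'' to ``not provable in $\APC_2$'', which is trivial by monotonicity of provability under theory extension. One could also remark that the argument gives the stronger statement that $\CPLS$ is not provable even from $\APC_2$ together with all true $\forall \PV$ sentences, which sharpens the non-provability from $\APC_2$ alone and makes the separation robust to the choice of base theory ($\PV$ versus $\true$) discussed in Subsection~\ref{subsec:truth-proof}.
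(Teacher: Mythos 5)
Your proof is correct and matches the paper's own treatment: the paper also derives the corollary as an immediate consequence of Theorems~\ref{lem:APC_2_characterize} and~\ref{thm:cpls-not-in-ourclass}, with provability in $T^2_2$ coming from the inductive argument already noted in Subsection~2.1 (and originally from~\cite{CPLS}). Your closing remark that the separation in fact holds against $\APC_2 + \true$ is accurate and is indeed the stronger form of the result.
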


This naturally also limits the strength of theories that are provable in $\APC_2$,
such as the following one based on the \changed{usual (non-Herbrandized)} ordering principle.

\begin{corollary}
Consider the theory consisting of $T^1_2$ together with axioms stating
that for every $\PV$ formula $R(x,y)$ and every $a$, 
if $R$ is a partial ordering on~$[0,a)$ then $[0,a)$ contains an $R$-minimal element.
This theory, in the relativized setting, is strictly weaker than $T^2_2$.
\end{corollary}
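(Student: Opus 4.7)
The plan is to sandwich the theory $T := T^1_2 + \mathrm{OP}(\PV)$, where $\mathrm{OP}(\PV)$ abbreviates the ordering-principle axiom scheme of the corollary, between $T^1_2$ and $\APC_2$, and then apply Corollary~\ref{cor:CPLS_not_APC2}. Once it is established that $\APC_2$ proves every instance of $\mathrm{OP}(\PV)$, every $\forall \Sigma^b_1$ consequence of $T$ is also a consequence of $\APC_2 + \true$, and so in particular the totality of $\CPLS$ is not provable in $T$ in the relativized setting. Since $T^2_2 \vdash \CPLS$, as already noted in Subsection~\ref{subsec:tfnp}, the containment $T \subseteq T^2_2$ must then be strict.

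The key step is therefore to derive $\mathrm{OP}(\PV)$ inside $\APC_2$. Fix a $\PV$-formula $R(x,y)$ and a parameter $a$, and reason in $\APC_2$. Suppose towards a contradiction that $R$ is a strict partial order on $[0,a)$ with no $R$-minimal element. Using the oracle for $R$, binary search via a single NP query produces a $\P^{\NP}$ function $s$ with $R(s(x),x)$ for every $x<a$. Classically, iterating $s$ from $0$ gives $a+1$ pairwise distinct elements of $[0,a)$, contradicting the pigeonhole principle; but literal iteration $a$ times is not polynomial-time feasible. Instead, I would package the contradiction as an instance of the surjective weak pigeonhole principle for $\P^{\NP}$ functions, which is the defining axiom of $\APC_2$. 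The required instance is to be built from $s$ together with a complementary retraction map, using the approximate-counting apparatus of $\APC_2$, in the style of Je\v r\'abek's derivations of the tournament principle and the finite Ramsey theorem in~\cite{jerabek_approx, jerabek_hash}.

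The main obstacle is precisely this last formalization step: encoding the ``unboundedly long descending chain'' consequence of ``$R$ has no minimum'' as a single violation of $\sWPHP$ for $\P^{\NP}$, rather than as an actual iteration of $s$. Once this is in place, the rest is bookkeeping. The containment $T \subseteq \APC_2 + \true$ (at the level of $\forall \Sigma^b_1$ consequences) combines with Corollary~\ref{cor:CPLS_not_APC2} to give $T \nvdash \CPLS$, and comparison with $T^2_2 \vdash \CPLS$ closes the argument and establishes that the inclusion in $T^2_2$ is strict in the relativized setting.
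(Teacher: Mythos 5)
Your high-level structure is exactly the paper's: show that the theory $T := T^1_2 + \mathrm{OP}(\PV)$ sits inside both $T^2_2$ and $\APC_2$, then invoke Corollary~\ref{cor:CPLS_not_APC2} to conclude that $T$ does not prove $\CPLS$ while $T^2_2$ does, hence the containment $T \subseteq T^2_2$ is strict. That much is correct and is what the paper does.

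The gap is in the key lemma that $\APC_2$ proves $\mathrm{OP}(\PV)$. The paper does not prove this; it cites it, attributing the argument to Je\v{r}\'abek as presented in~\cite{BKT2014}. You instead try to reconstruct the derivation, get as far as extracting a $\P^{\NP}$ ``successor'' function $s$ by binary search, and then explicitly say that the remaining step --- converting the nonexistence of a minimal element into a violation of $\sWPHP$ for $\P^{\NP}$ functions --- is ``the main obstacle'' and leave it as a programme rather than a proof. That step is genuinely the entire content of the lemma; without it your argument does not establish $T \subseteq \APC_2$ (even at the $\forall\Sigma^b_1$ level), which is what the conclusion hinges on. It is also not obvious that the route you sketch (manufacturing a single $\sWPHP$ instance directly from $s$ and a retraction) is the one Je\v{r}\'abek's argument actually takes; the known $\APC_2$ derivations of combinatorial principles typically go through the approximate cardinality apparatus built on top of $\sWPHP$, not a bare pigeonhole violation, and naively you cannot iterate $s$ enough times to get an injection or surjection of the right kind. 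So either cite the provability of $\mathrm{OP}(\PV)$ in $\APC_2$ as a known result (as the paper does), or actually supply the counting argument.

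One smaller point: you assert $T \subseteq T^2_2$ without justification. This is easy --- the paper notes it follows by induction on $a$ --- but since your whole conclusion is a strict inclusion into $T^2_2$, the non-strict inclusion deserves at least a sentence.
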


\begin{proof} 
This theory is provable in $T^2_2$ by straightforward induction on $a$.
It is also provable in $\APC_2$ by an entirely different proof \changed{involving a
reduction to the tournament principle}, 
as is shown
in~\cite{BKT2014}
(by an argument due to Je\v{r}\'{a}bek). 
By Corollary~\ref{cor:CPLS_not_APC2}, the theory does not prove~$\CPLS$,
\changed{hence is weaker than $T^2_2$.}
\end{proof}

Both CPLS and the ordering principle have short proofs in the \emph{resolution}
propositional proof system, and the argument above could also be used to show
that the ordering principle is not ``complete" for resolution, in the sense that there
are things with short proofs in resolution which do not follow from it.
But it is not clear what the most suitable notion of ``follow from" is here.
\changed{
\begin{corollary} \label{cor:ramsey_HOP}
$\CPLS$ is not polynomial-time Turing reducible to
$\mathrm{RAMSEY}$ or~$\mathrm{HOP}$.
\end{corollary}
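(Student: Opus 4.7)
The plan is to combine Theorems~\ref{lem:APC_2_characterize} and~\ref{thm:cpls-not-in-ourclass} with the closure of $\APPROX$ under polynomial-time Turing reductions, which itself follows from Lemma~\ref{lem:folklore}. Explicitly, if $Q$ is Turing reducible to an NP search problem $R$ provably total in $\APC_2$, then by the implication $(3) \Rightarrow (1)$ of that lemma (applicable since $\APC_2 \supseteq S^1_2$ and its axioms are closed under substituting polynomial-time relations for the oracle), $Q$ is provably total in $\APC_2 + \true$, and hence $Q \in \APPROX$ by Theorem~\ref{lem:APC_2_characterize}. Since $\CPLS \notin \APPROX$ by Theorem~\ref{thm:cpls-not-in-ourclass}, it therefore suffices to verify that $\mathrm{RAMSEY}$ and $\mathrm{HOP}$ are each provably total in $\APC_2$.

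For $\mathrm{RAMSEY}$, this is essentially contained in Je\v{r}\'{a}bek~\cite{jerabek_hash}: the standard greedy proof of the finite Ramsey theorem, which extracts a monochromatic set of size $\lfloor \log x / 2 \rfloor$, relies only on approximate-counting bounds on the sizes of colour classes, and such reasoning is available in $\APC_2$. For $\mathrm{HOP}$, the totality follows from the ordering principle which, by the previous corollary, is provable in $\APC_2$. Reasoning inside $\APC_2$, given~$x$ together with oracles $\preccurlyeq$ and~$h$, one applies the ordering principle to the $\PV$-definable relation~$\preccurlyeq$ on $[0,x)$: either one exposes an obstruction to $\preccurlyeq$ being a partial order (yielding a $\mathrm{HOP}$ solution of the first kind), or one obtains a $\preccurlyeq$-minimal element $y$, at which the oracle value $h(y)$ supplies the second kind of solution.

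The main point requiring care is that the ordering principle is naturally $\forall \Sigma^b_2$, while we need the totality of $\mathrm{HOP}$ as a $\forall \Sigma^b_1$ statement. This is precisely what Herbrandization by the oracle function $h$ accomplishes: once $\APC_2$ proves the existence of a minimal element, Parikh's theorem supplies the polynomial length bound on its witness, and evaluating the oracle $h$ at this witness collapses the inner universal quantifier to a polynomial-time verification. Thus no combinatorial content beyond the proof of the ordering principle given in \cite{BKT2014} is required, and the corollary follows.
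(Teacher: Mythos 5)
Your proposal is correct and follows essentially the same route as the paper: combine Lemma~\ref{lem:folklore}(3)$\Rightarrow$(1), Theorem~\ref{lem:APC_2_characterize} and Theorem~\ref{thm:cpls-not-in-ourclass} with the fact that $\mathrm{RAMSEY}$ and $\mathrm{HOP}$ are provably total in $\APC_2$. The paper simply cites \cite{pudlak_ramsey,jerabek_hash} and \cite{BKT2014} for that last fact rather than re-deriving it, and the small elaboration you give for HOP doesn't really need Parikh's theorem (once $\APC_2$ proves a $\preccurlyeq$-minimal element $y$ exists, the Herbrandized conclusion is immediate for any $h$ since nothing lies strictly below $y$) -- but this is a cosmetic point that does not affect the argument.
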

\begin{proof}
 $\mathrm{RAMSEY}$ and $\mathrm{HOP}$
are in $\APPROX$ by 
Corollary~\ref{cor:things_in_APPROX}.
If $\CPLS$ were polynomial-time Turing reducible to either of these problems,
then Lemma~\ref{lem:folklore} would imply that $\CPLS$ is provable in $\APC_2 +\true$,
contradicting Theorem~\ref{lem:APC_2_characterize} and Theorem~\ref{thm:cpls-not-in-ourclass}.
\end{proof}
}

\section{Witnessing and definability} \label{sec:logic}

This section contains our main technical work in logic: a proof of Theorem~\ref{lem:APC_2_characterize}
via two lemmas corresponding to the two containments in the statement of the theorem.
The proofs assume some familiarity with bounded arithmetic.

Intuitively, $\APC_2$ is a combination of $T^1_2$ and the weak pigeonhole principle,
and what we show is that the NP search problems provably total in $\APC_2$
arise as a combination of PLS (which is known to correspond to~$T^1_2$~\cite{bk:boolean}) and the weak pigeonhole principle, with an important difference that, while a proof 
can make many ``calls'' to WPHP, our reductions only allow one call.
\changed{
We also introduce the following technical condition on reductions,
which we will need in our non-reducibility proof in Section~\ref{sec:non-reducibility}.
\begin{definition}\label{def:clean_reducibility}
We say that a NP search problem $Q$ is \emph{cleanly} PLS counterexample reducible
to a $\Sigma^p_2$ search problem $R$, if
$Q$ is PLS counterexample reducible to $R$ as in Definition~\ref{def:PLS_counterexample}
with the extra condition that the function $e$, which produces inputs to $R$
from inputs to $Q$, does not make any oracle calls.
\end{definition}
It may be helpful to think of such an $e$ as a translation between size parameters, for
which the structure of the oracle does not matter.
}

\begin{lemma} \label{lem:APC_to_counterexample}
Every $\NP$ search problem provably total in $\APC_2 + \true$ is 
\changed{cleanly} $\PLS$ counterexample reducible to an $\rWPHP_2$ problem.
\end{lemma}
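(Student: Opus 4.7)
The approach is to witness a proof of $\forall x\, \eb{y}{t(x)}\, Q(x,y)$ in $\APC_2 + \true$ by combining two standard tools: Buss-style witnessing for $T^1_2$, whose $\forall\Sigma^b_1$-consequences are exactly $\PLS$, and the fact that the sole additional axiom of $\APC_2$ over $T^1_2$ is the sWPHP for $\P^\NP$ functions, which is equivalent (after Je\v{r}\'abek) to the retraction WPHP underlying $\rWPHP_2$. The goal is to decompose the proof into a $T^1_2$-derivation that takes a single rWPHP ``hint'' as a hypothesis and turns it into either a $Q$-witness or a refutation of that hint, which is precisely the structure of a $\PLS$ counterexample reduction.

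\textbf{Normalization.} First absorb the $\true$-axioms in the style of Lemma~\ref{lem:folklore}. By Parikh's theorem, the resulting proof uses only finitely many instances of sWPHP, each for a $\P^\NP$ function $f^j_x$ whose relevant parameter $n_j$ is polynomially bounded in $|x|$. By Je\v{r}\'abek's equivalence between sWPHP and rWPHP for $\P^\NP$ functions, each such instance becomes an rWPHP instance with a pair $(f^j_x, g^j_x)$. Finally, consolidate these finitely many pairs into a single pair $(f, g)$ of $\P^\NP$ machines by prepending the index $j$ to the input and dispatching internally, and let $e(x)$ be the resulting size parameter, chosen large enough to accommodate all the $2 n_j(x)$ bounds and the index range. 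A solution $(v,w)$ to the consolidated $\rWPHP_2$-instance at parameter $e(x)$ then encodes a solution to one of the original WPHP instances.

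\textbf{Witnessing.} Apply Buss's witnessing theorem to the now purely $T^1_2$-proof of the implication ``$(v, w)$ is a valid $\rWPHP_2$-solution at parameter $e(x)$ implies $\eb{y}{t(x)}\, Q(x, y)$'', treating $y' = (v,w)$ as an additional input parameter. This produces a $\PLS$ problem $P(\ang{x, y'}, y'')$ together with a polynomial-time function $d(x, y'')$ such that every $\PLS$-solution $y''$ either yields a $Q$-witness $d(x, y'')$, or reveals a concrete inconsistency in $(v, w)$: either a counterexample to one of the NO-replies recorded in $w$, or a trace showing that the output of $w$ actually equals $v$. In both cases, $d(x, y'')$ refutes $R(e(x), y')$, where $R$ is the consolidated $\rWPHP_2$-problem.

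\textbf{Main obstacle.} The subtle point is that the correctness of $(v, w)$ as an $\rWPHP_2$-solution contains a $\Pi^b_1$ assertion -- namely the nonexistence of counterexamples to its NO-replies -- which a $\PLS$ procedure cannot directly verify in polynomial time. This is exactly the reason for the counterexample flavor of reduction: the $\PLS$ search reasons \emph{as if} the NO-replies in $w$ are correct and Skolemizes YES-replies by the witnesses supplied in $w$. If a NO-reply happens to be wrong, Buss-style witnessing applied to the $\Pi^b_1$ hypothesis ``$w$ is a computation'' extracts, at the corresponding proof step, a $\Sigma^b_1$-witness exposing the error, and this is precisely the counterexample returned by $d$. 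Carefully organizing this bookkeeping across cuts and $\Sigma^b_1$-induction steps in the original $T^1_2$-derivation, and verifying that the resulting object is genuinely a $\PLS$ problem in the sense of Subsection~\ref{subsec:tfnp}, is the main technical work.
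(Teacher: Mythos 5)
Your overall plan shares the skeleton of the paper's argument: absorb $\true$, isolate a single rWPHP hypothesis, and apply PLS/$T^1_2$ witnessing to obtain the counterexample reduction. But the pivotal normalization step contains a genuine gap: the phrase ``by Je\v{r}\'abek's equivalence between sWPHP and rWPHP for $\P^\NP$ functions, each such instance becomes an rWPHP instance with a pair $(f^j_x, g^j_x)$'' does not tell us where the $\P^\NP$ function $g^j_x$ actually comes from. Instance-by-instance, rWPHP is \emph{weaker} than sWPHP (a retraction pair failing on some $v$ does not say $v$ is out of range of $f$), so one cannot just swap hypotheses. The naive way to build a retraction $g$ from $f$ -- search for a preimage of $v$ under $f$ -- is a $\P^{\Sigma^p_2}$ procedure when $f$ is $\P^\NP$, which is too high to live in the $\rWPHP_2$ class. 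There is no off-the-shelf Je\v{r}\'abek lemma that hands you a $\P^\NP$-definable $g$ for a given $\P^\NP$ $f$.

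The paper obtains $g$ exactly where your sketch tries to avoid it: after moving the (negated) sWPHP axiom to the right-hand side of the sequent, one has an $S^2_2$-provable $\forall\Sigma^b_2$ sentence asserting ``either $F$ is a surjection, or there is a $Q$-witness, or $\phi$ fails.'' Buss's witnessing theorem for $S^2_2$ then produces a $\P^\NP$ function that, given $(x,v)$, outputs either a preimage $u$ with $F(u)=v$, or one of the other witnesses. The projection of this witnessing function onto the first coordinate \emph{is} the retraction $g$, and its complexity is $\P^\NP$ precisely because $S^2_2$ has $\P^\NP$ witnessing. Your plan applies only Buss witnessing for $T^1_2$, which is a PLS-level witnessing and cannot produce this $g$. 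A secondary difference is that the paper amplifies (using the $S^2_2$ argument from \cite{thapen_WPHP_models}) to a single sWPHP instance for a universal $\P^\NP$ machine \emph{before} extracting $g$, rather than consolidating multiple $(f^j,g^j)$ pairs afterward by dispatching on an index as you propose; the dispatching route could be made to work, but only after the $S^2_2$-witnessing step you omitted is inserted to produce each $g^j$ in the first place. Your ``Main obstacle'' paragraph, on the other hand, correctly captures why a \emph{counterexample} reduction rather than a plain many-one reduction is needed, and matches the last phase of the paper's proof.
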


\begin{proof}
Let $Q(x,y)$ be an $\NP$ search problem. Assume that 
\[\APC_2 + \forall z\,\phi(z) \vdash \forall x\,\exists y\, Q(x,y),\] 
where $\phi(z)$ is a $\PV$ formula such that $\N \vDash \forall z\,\phi(z)$ for all oracles.
Thus we have 
\[\APC_2 \vdash \exists y\, Q(x,y) \lor \exists z\,\neg\phi(z). \]
Writing out the definition of $\APC_2$, this means
\[
T^1_2 + \forall b, c \, \eb{v}{2b} \ab{u}{b} \, e(c,u) \neq v 
\vdash \exists y\, Q(x,y) \lor \exists z\,\neg\phi(z)
\]
where the formula on the left is $\sWPHP$
for  a universal $\P^\NP$ machine $e(c,u)$ running code $c$ on input $u$ with time bound $|c|$.
\changed{The quantifier $\forall b, \dots$ 
should strictly speaking be $\forall b \! \neq \! 0, \dots$
but we suppress this here and below
for the sake of readability.} 
Replacing~$T^1_2$ with the stronger theory $S^2_2$ and 
moving $\sWPHP$ to the right hand side gives
\changed{\begin{equation}\label{eqn:pre-amplification}
S^2_2 \vdash [\eb{b, c}{s'(x)} \, \ab{v}{2b} \eb{u}{b} \, e(c,u) = v ] \lor 
\exists y\, Q(x,y) \lor \exists z\,\neg\phi(z),
\end{equation}
where we have also used Parikh's theorem~\cite{parikh}
to bound $b$ and $c$ by some term~$s'(x)$.
We may assume $s'(x)$ has the form $2^{|x|^k} \! + \! x$ for some $k \in \N$,
where ``$+x$'' is included  to make it possible to 
recover~$x$ from~$s'(x)$ as required below.}
\changed{
The formula in square brackets asserts that $\sWPHP$ fails for $e$ 
for parameters $b,c$ less than $s'$. By a standard technical fact stated and proved below as Lemma~\ref{lem:WPHP_amplify},
we can amplify this failure to a $\P^\NP$ function $F$ which is a surjection from $(s')^5$ to $2(s')^5$, with no parameters.
This lets us remove one block of existential quantifiers from~(\ref{eqn:pre-amplification})
to give, setting the term $s$ to be $(s')^5$,
\begin{equation}\label{eqn:buss-witnessing-1}
S^2_2 \vdash
[ \ab{v}{2s} \eb{u}{s} \, F(u) = v  ]
\lor \exists y\, Q(x,y) \lor \exists z\,\neg\phi(z).
\end{equation}
}
To match the definition of $\rWPHP_2$,
we define a $\P^\NP$ function of two arguments~$a,u$ by $f_{a}(u) = \min(F(u), 2a-1)$.
Then (\ref{eqn:buss-witnessing-1}) is equivalent to
\begin{equation}\label{eqn:buss-witnessing-2}
S^2_2 \vdash
[ \ab{v}{2s} \eb{u}{s} \, f_s(u) = v  ]
\lor \exists y\, Q(x,y) \lor \exists z\,\neg\phi(z).
\end{equation}
The sentence in (\ref{eqn:buss-witnessing-2}) is $\forall \Sigma^b_2$, so
by Buss' witnessing theorem for $S^2_2$~(\cite{Buss:bookBA})
 there is a $\P^\NP$ machine which, provably in $T^1_2$,
maps the input parameters $x, v$ to a triple $\ang{u,y,z}$ witnessing one of the three existential quantifiers. 
Let~$g$ be defined so that $g_{s(x)}(v)$ first computes $x$ from $s(x)$ and then
outputs the first component $u$ of this witnessing function applied to $\ang{x,v}$, as long as
$u < s$; 
otherwise, $g$ outputs $0$. We have
\[T^1_2 \vdash
[ \ab{v}{2s} \, f_{s}(g_s(v)) = v  ]
\lor \exists y\, Q(x,y) \lor \exists z\,\neg\phi(z).
 \]
Now, $f_s(g_s(v)) = v$ can be written as a $\Pi^b_2$ formula: 
\[\forall w\, [w \textrm{ is a computation of } f_s(g_s(v)) \rightarrow \mathop{\mathrm{output}}(w) = v],\] 
where $w$ is suitably bounded by a term in $x$, and 
``$w$ is a computation of $f_s(g_s(v))$'' is a $\Pi^b_1$ formula 
as in Definition~\ref{def:P_NP_computation}, describing the $\P^\NP$ 
machine that first computes $g$ and then computes $f$ on the output.
%

So we have 
\begin{multline*}
T^1_2 \vdash
\ab{v}{2s} \forall w\, [w \textrm{ is not a computation of } f_s(g_s(v)) \lor \mathop{\mathrm{output}}(w) = v ]   \\
\lor \exists y\, Q(x,y) \lor \exists z\,\neg\phi(z).
\end{multline*}
The formula in square brackets is now $\Sigma^b_1$, so by the PLS witnessing theorem for $T^1_2$ (\cite{bk:boolean})
there is a PLS problem $P(x'', y'')$ witnessing this whole sentence.
That is, if we solve $P$ on input $x'' = \ang{x,v,w}$ and 
find $y''$ such that $P(x'',y'')$, then one of the following holds:
\begin{enumerate}
\item
$w$ is not a precomputation of $f_s(g_s(v))$, or has output $v$\changed{,}
\item
$y''$ is a tuple containing a witness that some NO reply
in $w$ is wrong\changed{,}
\item
$y''$ is a tuple containing a witness to $\exists y\, Q(x,y)$.
\end{enumerate}
We know that~$y''$ cannot contain a witness to the last disjunct
$\exists z\, \neg \phi(z)$ as
by assumption $\N \vDash \forall z\,\phi(z)$.

Using the notation of Definition~\ref{def:PLS_counterexample},
letting $d$ be the function that outputs the witness of incorrectness in case~2., 
and the witness to $\exists y\, Q(x,y)$ in case~3., 
and setting $e(x) = s(x)$, we see that $Q$ is \changed{cleanly}
PLS counterexample reducible to the $\rWPHP_2$ problem given by $f$ and $g$.
\end{proof}

\changed{
To complete the proof of Lemma~\ref{lem:APC_to_counterexample} we need a technical lemma from~\cite[Section~2]{thapen_WPHP_models}
about ``amplifying'' failures of WPHP.}

\begin{lemma} \label{lem:WPHP_amplify}
\changed{Let $e$ be a $\P^\NP$ function. There is a $\P^\NP$ function
$F$ such that provably in~$S^2_2$, 
if $e(c, \cdot)$ is a surjection  $b\rightarrow 2b$ then $F(a,b,c, \cdot)$ is a surjection
$b \rightarrow 2^{|a|}b$.
Furthermore there is a $\P^\NP$ function $F'$ such 
that provably in $S^2_2$, if $e(c, \cdot)$ is a surjection $b \rightarrow 2b$ for some $0<b,c<s$,
then $F'$ is a surjection $s^5 \rightarrow 2s^5$ (with no parameters).}
\end{lemma}

\begin{proof}
\changed{Let $h(a,b,c,u)$ be the function that, assuming $u < 2^{|a|}b$,
interprets $u$ as a pair $\ang{u_0,u_1}$ with $u_0 < b, u_1 < 2^{|a|}$, 
and outputs the number $e(c,u_0)2^{|a|} + u_1$.
If $e(c,\cdot)$ is a surjection from $b$ onto $2b$, then
$h(a,b,c,\cdot)$ is a surjection from $2^{|a|}b$ onto~$2^{|a|+1}b$.
Consider now the function $F(a,b,c,u)$ which, 
 given $u < b$, applies the composition of $e(c,\cdot), h(1,b,c,\cdot),
h(2,b,c,\cdot), h(4,b,c,\cdot),\ldots, h(2^{|a|-1},b,c,\cdot)$, to $u$, in the order shown.
If $e(c,\cdot)$ is a surjection from $b$ onto $2b$, then
$F(a,b,c,\cdot)$ is a surjection from $b$ onto $2^{|a|}b$.
To prove this, we consider any fixed number $v<2^{|a|}b$
and show by reverse induction on $i < {|a|-1}$
that $v$ can be obtained by applying the composition of
$h(2^i,b,c,\cdot), h(2^{i+1},b,c,\cdot),\ldots, h(2^{|a|-1},b,c,\cdot)$
to some argument $u$ below $2^ib$. This is where we need the scheme $\Sigma^b_2$-LIND,
which is available in $S^2_2$.}

\changed{For the last claim, we note that if we code quadruples as $\ang{\ang{\cdot,\cdot},\ang{\cdot,\cdot}}$
where $\ang{\cdot,\cdot}$ is Cantor's pairing function, then for each $k$ larger than a fixed natural number
any quadruple of numbers less than $k$ has code less than $k^5$.
We treat the argument $x<s^5$ of $F'$ 
as a quadruple $\ang{a',b',c', u'}$ and set $F'(\ang{a',b',c',u'}) := F((a'+1)^6, b', c', u')$.
Since such arguments $x$ will cover all quadruples of numbers below $s$,
it follows from the properties of $F$ that $F'$ contains every number less than $2^{6|s|}b$
in its range, and in particular every number less than $2s^5$.}
\end{proof}

\changed{
The next lemma is the converse to Lemma~\ref{lem:APC_to_counterexample}.}

\begin{lemma}  \label{lem:counterexample_to_APC}
If $Q$ is an $\NP$ search problem 
$\PLS$ counterexample reducible to an $\rWPHP_2$ problem, 
then $Q$ is provably total in $\APC_2 + \true$.
\end{lemma}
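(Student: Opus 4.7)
The plan is to reason inside $\APC_2 + \true$. Given an input $x$ to $Q$, I will first construct in $\APC_2$ a genuine solution $y' = (v,w)$ to the $\rWPHP_2$ instance at parameter $e(x)$, then use the ability of $T^1_2$ to solve $\PLS$ problems to obtain $y''$ with $P(\ang{x,y'}, y'')$, and finally invoke the counterexample-reduction clause of Definition~\ref{def:PLS_counterexample} --- which is $\forall \PV$ and therefore available from $\true$ --- to conclude that $d(x, y'')$ witnesses $Q(x, \cdot)$.

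For the construction of $y'$, let $n = e(x)$ and write $f_n \colon n \to 2n$ and $g_n \colon 2n \to n$ for the $\P^\NP$ functions defining the $\rWPHP_2$ problem. Since $\APC_2$ extends $T^1_2$ by $\sWPHP$ for $\P^\NP$ functions, it immediately proves the retraction version: if every $v<2n$ satisfied $f_n(g_n(v)) = v$ then $f_n$ would surject onto $[0,2n)$, contradicting $\sWPHP$. So $\APC_2$ produces some $v<2n$ outside the range of $f_n$. Using the fact that $\P^\NP$ coincides with the class of $\Sigma^b_2$-definable functions of $T^1_2$, I then obtain inside $\APC_2$ legitimate computations $w_1$ of $g_n(v)$ and $w_2$ of $f_n$ applied to the output of $w_1$, and concatenate them to form a computation $w$ of $f_n(g_n(v))$ whose output lies in the range of $f_n$ and therefore differs from $v$. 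The crucial point is that $\APC_2$ proves not just that $w$ is a precomputation but that it satisfies the full $\Pi^b_1$ formula from Definition~\ref{def:P_NP_computation}, including clause~\ref{def:P_NP-part3} asserting that no purported counterexample refutes any NO reply in $w$.

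Next, $T^1_2 \subseteq \APC_2$ proves every $\PLS$ problem total, so $\APC_2$ yields $y''$ with $P(\ang{x, y'}, y'')$. Applying the reduction clause (available from $\true$), one of the two alternatives of Definition~\ref{def:PLS_counterexample} holds. The first would require $d(x, y'')$ to polynomial-time certify the falsity of $R(e(x), y')$: either by witnessing $v \ge 2n$, or that the output of $w$ equals $v$, or by exhibiting a counterexample to some NO reply in $w$. All three are directly contradicted by what $\APC_2$ has proved about $(v,w)$ --- the third, critically, by the $\Pi^b_1$ clause already established --- so this alternative is refuted in $\APC_2 + \true$. Therefore the second alternative must hold, and $y := d(x, y'')$ witnesses $Q(x, y)$.

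The step I expect to require most care is obtaining $w$ as a genuine computation rather than a precomputation, and making sure the $\Pi^b_1$ ``no counterexample'' property is preserved when $w_1$ and $w_2$ are concatenated. This is precisely what Buss's witnessing theorem for the $\Sigma^b_2$-definable functions of $T^1_2$ delivers, but one must keep the $\Pi^b_1$ shape of ``$w$ is a computation'' explicit throughout, so that the refutation of the first alternative above --- which rests on a $\Pi^b_1$ theorem of $\APC_2$ contradicting any polynomial-time falsity certificate --- goes through cleanly. Everything else ($\sWPHP$ implying $\rWPHP$, $\PLS$-totality via $T^1_2$, and the availability of the reduction clause in $\true$) is routine.
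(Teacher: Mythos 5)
Your proposal is correct and takes essentially the same route as the paper: get $v<2e(x)$ from $\sWPHP(\PV_2)$, construct a full computation $w$ of $f_{e(x)}(g_{e(x)}(v))$ in $T^1_2$ (with output necessarily different from $v$), solve the $\PLS$ problem $P$ on $\ang{x,\ang{v,w}}$ in $T^1_2$, and then observe that the first alternative of Definition~\ref{def:PLS_counterexample} is ruled out because $\APC_2$ has already established, as a $\Pi^b_1$ theorem, that $w$ is a genuine computation with output $\neq v$. The only difference is organizational: the paper packages the same reasoning as the provable totality in $\APC_2$ of an auxiliary $\NP$ search problem $\xi$ (whose solution is the tuple $\ang{v,w,y,y''}$) and then invokes Lemma~\ref{lem:folklore} to pass from $Q \le \xi$ to provability in $\APC_2 + \true$, whereas you carry out the argument directly inside $\APC_2 + \true$, using $\true$ exactly where the reduction clause is needed. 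Both presentations are sound; the paper's is slightly cleaner because it cleanly separates what is provable in $\APC_2$ alone from the one appeal to $\true$, via the already-proved closure lemma.
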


\begin{proof}
Let $Q$ be an $\NP$ search problem that is $\PLS$ counterexample reducible to the $\rWPHP_2$ problem given by the functions $f$ and $g$.
Let $P(x'',y''), d, e$ be as in the definition of $\PLS$ counterexample reducibility. 
Consider the $\PV$ formula $\xi(x,v,w,y,y'')$ defined by 
\begin{multline*}
v < 2e(x) \land P(\ang{x,\ang{v,w}}, y'') \land y = d(x,y'') \\ \land [ y \textrm{ does not witness that } w \textrm{ is not a computation of } f_{e}(g_{e}(v)) \neq v],
\end{multline*}
with $y''$ suitably bounded. 
We view $\xi$ as defining an NP search problem with input $x$ 
and output $\ang{v,w,y,y''}$. Notice that $\xi(x,v,w,y,y'')$ implies
that $Q(x,y)$, so we have $Q \le \xi$.

We claim that 
$\APC_2$ 
proves that $\xi$ is total.
To see this, work in $\APC_2$ and consider some input $x$. 
By $\sWPHP(\PV_2)$, there is some $v<2e(x)$ which is outside of the range of $f_{e(x)}$ on inputs below~$e(x)$. 
By $T^1_2$, there is some computation of $f_{e(x)}(g_{e(x)}(v))$, say $w$, which by the choice of $v$ must produce an output different from $v$. 
Again by $T^1_2$, there is a solution to $P$ on input $\ang{x,\ang{v,w}}$, say $y''$. 
Clearly, $y = d(x,y'')$ cannot witness that~$w$ is not a computation of $f_{e(x)}(g_{e(x)}(v))$ 
with output different from $v$, so $\xi(x,v,w,y,y'')$ holds. This proves the claim.

We have shown that $Q$ is reducible to a problem provably total in $\APC_2$.
It follows from Lemma \ref{lem:folklore} that $Q$ is provably total in $\APC_2 + \true$.
\end{proof}

\changed{
Lemmas~\ref{lem:APC_to_counterexample} and~\ref{lem:counterexample_to_APC} have the following additional consequence.
\begin{corollary} \label{cor:clean} 
The class $\APPROX$ can equivalently be defined as  the set of $\NP$ search problems 
which are \emph{cleanly} $\PLS$ counterexample reducible to an $\rWPHP_2$ problem.
\end{corollary}
}

\section{Fixing lemma} \label{sec:fixing}

This section contains our main technical result in complexity, 
Lemma~\ref{lem:fix_tree},
which is an extension of the ``fixing lemma'' from~\cite{RRR}.
There, the fixing lemma is a limited switching lemma which says the following:
given suitable parameters $a,b,c$ for CPLS, 
for a well-chosen probability distribution on partial restrictions to an oracle $\alpha$
encoding $(f_i)_{i<a-1}$, $u$, $(G_i)_{i<a}$, a random restriction has a relatively high probability
of determining the value of a narrow CNF in propositional variables standing for bits of $\alpha$.
Importantly,
the restriction does not reveal a witness to CPLS; in particular,
the (unsatisfiable) CNF asserting that there is no witness to CPLS
has to be determined to be true.

In our application in the proof of Theorem \ref{thm:cpls-not-in-ourclass},
we want to fix answers to the NP queries made in a $\P^\NP$ computation.
Each query is (the negation of) a CNF, but now there are many of them,
and they are made adaptively depending on earlier replies. So we cannot use
the lemma from~\cite{RRR} directly.
Instead we adapt the proof to show that given a low-depth decision 
tree labelled with CNFs,
 with high probability a random restriction fixes
the truth values of all CNFs along some branch.
This is the basic content of Lemma~\ref{lem:fix_tree} below.

Our definitions are essentially as the same as in~\cite{RRR}, and so is one \changed{of the proofs}. 
We will repeat some definitions almost verbatim, but will only give high-level descriptions
of some other definitions and of the unchanged proof details.

We think of the bits of the oracle as propositional variables. So, for example, for each 
node $(i,x)$ there are
$\log b$ variables $(f_i(x))_0, \dots, (f_i(x))_{\log b-1}$ expressing the value
of $f_i(x)$.
A total oracle is defined
by a total assignment to all variables. We will be working with partial oracles,
which we will also call \emph{partial assignments} or \emph{restrictions}.

We copy in full the definition of a \emph{random restriction} from~\cite{RRR}. 
First, a \emph{path} in a partial assignment
$\beta$ is a maximal sequence 
$(i, x_0), \dots, (i+k, x_k)$ of nodes such that  
$f_{i+j}(x_j) = x_{j+1}$ in $\beta$ for each $j \in [0,k)$.
\changed{Note that a path may consist of a single node $(i,x)$
if $x$ is neither in the domain of $f_i$ nor in the range of $f_{i-1}$.} 
If all functions $f_i$ are partial injections, 
then every node is on some unique path.

\begin{definition} (\cite[Definition 5.7]{RRR})
\label{def:CPLS_restriction}
Fix parameters $0<p,q<1$. 
Let $\mathcal{R}_{p,q}$ be the distribution of random restrictions
chosen as follows. 
\begin{enumerate}
\item[R1.]
For each pair $i<a$ and $x<b$, with probability 
$(1-p)$ include $(i,x)$ in a set $Z$. For each $i<a$, choose $f_i$ uniformly at random from the partial injections
from the domain $\{ x<b : (i,x) \in Z \}$
 into $b$. 
\item[R2.]
Set colours on the path beginning at $(0,0)$ so that $G_i(x,y)=0$ for 
all~$y$ for all nodes~$(i,x)$ on that path. 
\item[R3.]
For every other path $\pi$, with probability $(1-q)$
\emph{colour} $\pi$ randomly with one colour. That is, choose uniformly at random a colour $y$ and, for every node $(i,x)$ on $\pi$, set $G_i(x,y)=1$ and then set $G_i(x,y')=0$ for all $y'\neq y$.
\item[R4.]
Finally consider each node $(a-1, x)$ on the bottom level. It is on some 
path $\pi$. If~$\pi$ was coloured at step R3, then set $u(x)=y$ where $y$ is the unique colour assigned to $\pi$ (that is, $G_{a-1}(x,y)=1$). 
Otherwise leave~$u(x)$ undefined.
\end{enumerate}
We will also use $\mathcal{R}_{p,q}$ to denote the support of this distribution.
\end{definition}

We take the definitions of \emph{legal restrictions}
and \emph{good restrictions} from~\cite[Definition 5.4 and Lemma 5.8]{RRR}. 
\emph{Legal} restrictions are those that meet a minimal standard of ``niceness" -- 
on every path either no colour variables are set, 
or they are all set in one of a few particular ways which do not immediately witness CPLS. 
\changed{Among restrictions $\rho \in \R_{p,q}$, the only ones that are not legal are those
that contain a path connecting $(0,0)$ with some node of the form $(a-1, x)$.}
Our lower bound in the next section
will make use of a game played between a Prover, who is trying 
to witness CPLS
by making oracle queries, and an Adversary 
who is trying to answer queries in a way that does not witness CPLS. 
It will turn out that the Adversary can restrict herself
to answers that come from legal restrictions.
In effect, we do not have to worry about 
the evaluation of formulas under restrictions which are not legal.

A \emph{good} restriction is a legal one 
which is of typical size, measured in various ways -- in particular
no path is very long, and there is a reasonable fraction
of variables unset at every level. A \emph{bad} restriction is one which is not good.

It may be useful to keep in mind what the analogous definitions would be if we were dealing with 
the more familiar pigeonhole principle PHP instead of CPLS. A legal restriction would be any restriction
representing a partial injection. With probability parameter $p$, a random restriction would choose holes 
independently with probability $1-p$, and then randomly map some pigeons to the chosen holes. A good
restriction would be a legal one that leaves at least, say, a fraction $p/2$ of holes unset.

We choose a suitable large $n$ and fix our parameters as
$a=b=n$, $c=\lfloor n^{1/7}\rfloor$, $p=n^{-4/7}$ and $q=n^{-2/7}$,
where $b$ and $c$ are powers of $2$.

\begin{lemma} \label{lem:prob_bad} \emph{(\cite[Lemma 5.8]{RRR})}
The probability that a random restriction is bad is exponentially small in $n$.
\end{lemma}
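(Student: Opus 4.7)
The plan is to follow the argument of \cite[Lemma 5.8]{RRR} essentially verbatim, verifying that our parameter choices $a=b=n$, $c=\lfloor n^{1/7}\rfloor$, $p=n^{-4/7}$, $q=n^{-2/7}$ make all the relevant deviation probabilities exponentially small in $n$. The random restriction is built level by level in steps R1--R4 from independent random choices (conditioned on the injectivity constraint in R1), so the proof reduces to a union bound of concentration statements, one for each way a restriction can fail to be of ``typical size''.

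First, I would list the handful of quantitative conditions that the goodness criterion from \cite{RRR} imposes, then handle them separately. The first is a lower bound on the number of nodes $x<b$ for which $f_i(x)$ is undefined at each fixed level $i$. This count is binomial with mean $pn=n^{3/7}$ (since injectivity of $f_i$ has negligible marginal effect when $pn\ll n$, as the conditional probability each node is a hole changes by at most $O(1/n)$), so by a Chernoff bound the probability of deviating by a constant factor from $pn$ is $\exp(-\Omega(n^{3/7}))$. Taking a union bound over $n$ levels keeps this exponentially small. The second is a cap on path length: the probability that the path starting at a fixed node $(i,x)$ extends by $L$ steps is at most $(1-p)^L\le e^{-pL}$, so choosing a threshold $L=\Theta(n^{5/7}\log n)$ makes this at most $\exp(-\Omega(n^{1/7}\log n))$, and a union bound over the $n^2$ possible starting nodes still leaves an exponentially small failure probability. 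The third is concentration of the number of paths that receive a colour in step R3, which is again a sum of near-independent Bernoulli-$(1-q)$ indicators and is handled by Chernoff.

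Finally, the choice to set colours to $0$ along the root path in step R2 and then to colour each other path with at most one colour in step R3 makes every restriction in the support of $\mathcal{R}_{p,q}$ automatically legal in the sense of \cite[Definition 5.4]{RRR}, so ``bad'' here means only ``atypically sized'', not ``illegal''. Combining the three bullet points above by a union bound gives that the probability of being bad is bounded by $\exp(-n^{\Omega(1)})$, which is the desired conclusion.

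The main thing to be careful about is the weak dependence among the indicators in R1, introduced by the fact that $f_i$ is a random partial injection rather than a random function; I would handle this by the standard coupling used in \cite{RRR}, comparing the injective model to independent samples and bounding the total variation distance by $O((pn)^2/n)=O(n^{-1/7})$, which is absorbed into the constants in the Chernoff exponents. No genuinely new probabilistic idea is needed beyond what is in \cite{RRR}; the point is simply that the specific exponents $4/7$, $2/7$, $1/7$ have been chosen so that each of $pn$, $p^{-1}$, and the relevant products remain polynomially large in $n$, which is exactly what drives each Chernoff bound into the exponentially-small regime.
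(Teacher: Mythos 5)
The paper contains no proof of this lemma; it is cited directly from [RRR, Lemma~5.8], so there is nothing here to compare you against beyond the informal gloss the present paper gives (``no path is very long, and there is a reasonable fraction of variables unset at every level''). With that caveat, your sketch is a plausible reconstruction and is consistent with the parameter choices. Your key structural observation --- that legality is automatic from the construction, because every path in the support of $\Rpq$ is either fully uncoloured, monochromatic, or the all-zero $(0,0)$-path, each of which is a permitted local pattern, so that ``bad'' only ever means ``atypically sized'' --- is the right starting point. Chernoff concentration for the per-level counts, the geometric tail $(1-p)^L \le e^{-pL}$ for path lengths, and a union bound over polynomially many nodes and levels are the right tools, and the exponents $4/7$, $2/7$, $1/7$ do leave each of the relevant means polynomially large as you say.

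Two quibbles, neither affecting correctness. First, your precise list of goodness conditions and the threshold $L=\Theta(n^{5/7}\log n)$ are guesses at what [RRR, Definition~5.4 and Lemma~5.8] actually stipulate; the present paper does not pin these down, so you would need to check them against the source rather than take them as given. Second, the coupling argument you invoke for the count of undefined $f_i(x)$ at a fixed level is unnecessary: step R1 puts each $(i,x)$ into the domain of $f_i$ independently with probability $1-p$, and the injectivity constraint only affects the \emph{range}, so the number of $x$ with $f_i(x)$ undefined is exactly $\mathrm{Bin}(b,p)$ with no residual dependence to control. A coupling would be needed only if one of the goodness conditions counted nodes missing from the range of $f_i$, and even there the count is deterministic given the domain size.
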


\begin{definition}\label{def:fixing}
Let $\rho$ be a restriction. We say that a CNF $B$ is:
\begin{itemize}
\item \emph{fixed to $0$} by $\rho$ if $\rho$ falsifies $B$, that is, if for some conjunct in $B$ each literal in the
conjunct is set to $0$ by $\rho$,
\item \emph{fixed to $1$} by $\rho$ if it is not fixed to $0$ by any legal extension of $\rho$.
\end{itemize} 
\end{definition}

Note that a legal restriction can fix a CNF to at most one truth value. The following proposition
is therefore obvious.

\begin{proposition} \label{lem:preserve_fixing}
If $\rho$ fixes a CNF $B$ then every 
extension of $\rho$ also fixes~$B$ to the same value.
\end{proposition}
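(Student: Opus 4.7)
The proof will be a straightforward case analysis driven by Definition~\ref{def:fixing}, exploiting the fact that both notions of ``fixed'' are preserved under extension for purely structural reasons. Let $\rho'$ be any extension of $\rho$, and split into the two cases of how $\rho$ fixes $B$.

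In the case that $\rho$ fixes $B$ to $0$, the plan is to pull out, by definition, a conjunct $C$ of $B$ in which every literal is set to $0$ by $\rho$. Because $\rho' \supseteq \rho$ agrees with $\rho$ on every variable that $\rho$ assigns, those same literals remain set to $0$ under $\rho'$. Hence $\rho'$ still falsifies $C$ and therefore still falsifies $B$, so $\rho'$ fixes $B$ to $0$.

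In the case that $\rho$ fixes $B$ to $1$, the key observation is the transitivity of the extension relation on partial assignments: every legal extension $\rho''$ of $\rho'$ is automatically a legal extension of $\rho$. Since no legal extension of $\rho$ fixes $B$ to $0$ (by assumption and the definition of ``fixed to $1$''), no such $\rho''$ fixes $B$ to $0$. Applying the definition again, this is precisely what it means for $\rho'$ to fix $B$ to $1$.

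There is no real obstacle here — the statement is essentially a monotonicity property that falls directly out of Definition~\ref{def:fixing} once one writes out the quantifier structure. The only thing worth being careful about is not conflating arbitrary extensions with legal extensions: in the ``fixed to $1$'' clause, the relevant witnesses range over legal extensions only, and the argument works because legality of $\rho''$ does not depend on whether we view it as extending $\rho$ or $\rho'$. No probabilistic or combinatorial input from Lemma~\ref{lem:prob_bad} or the restriction distribution $\mathcal{R}_{p,q}$ is needed.
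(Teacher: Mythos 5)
Your proof is correct and is exactly the elementary unwinding of Definition~\ref{def:fixing} that the paper has in mind when it declares the proposition ``obvious'' without writing out details. The two cases (monotonicity of falsification for fixed-to-$0$, transitivity of extension among legal restrictions for fixed-to-$1$) are precisely the right decomposition, and your caution about legal versus arbitrary extensions is the only subtlety there is.
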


It follows from the proof of the
 ``fixing lemma''  \cite[Lemma~5.9]{RRR} that, for~$k$ reasonably small compared to $n$,
the probability that a given $k$-CNF is 
fixed by a random restriction is relatively high -- in fact, the probability that it
is \emph{not} fixed is $O(kn^{-1/7})$. 
We need a slightly more general version that also bounds some conditional probabilities.

\begin{lemma} [conditional fixing lemma] \label{lem:conditional_fixing}
Let $A_1, \dots, A_m$ be a sequence of $k$-CNFs and
$e_1, \dots, e_m$ be a sequence of $0/1$ values
such that
\[
\prob [ \rho \textrm{~is bad} \, | \, 
\textrm{$\rho$ fixes each $A_i$ to $e_i$}] < 1/2.
\]
Let $B$ be a $k$-CNF.
Then
\[
\prob [ \rho \textrm{~does not fix~} B \, | \, 
\textrm{$\rho$ is good and $\rho$ fixes each $A_i$ to $e_i$}] < 12kn^{-1/7}.
\]
\end{lemma}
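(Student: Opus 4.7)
The plan is to split the argument in two. First I would strip off the conditioning on goodness. Writing $G$ for the goodness event and $E$ for the event that $\rho$ fixes each $A_i$ to $e_i$, the hypothesis gives $\prob[G \mid E] > 1/2$, so
\[
\prob[B \text{ not fixed} \mid G \cap E] \;\leq\; \frac{\prob[B \text{ not fixed} \mid E]}{\prob[G \mid E]} \;<\; 2\, \prob[B \text{ not fixed} \mid E],
\]
and it suffices to prove the simpler bound $\prob[B \text{ not fixed} \mid E] \leq 6 k n^{-1/7}$.

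For the second bound I would revisit the proof of the unconditional fixing lemma in~\cite{RRR} and check that its clause-by-clause analysis adapts to the conditional setting with only a constant-factor loss. The key structural fact enabling this is Proposition~\ref{lem:preserve_fixing}: the event $E$ is \emph{upward closed} under extension by legal restrictions, in the sense that once $\rho$ fixes each $A_i$ to $e_i$, every legal extension of $\rho$ continues to do so. Since the original proof reasons about a single random restriction together with the set of its legal extensions, layering an upward-closed conditioning event on top is structurally compatible with the union bound over the clauses of~$B$ that the original proof is built around. Going through each per-clause probability in the~\cite{RRR} argument and verifying that conditioning on $E$ inflates it by at most a constant factor should yield the required $O(kn^{-1/7})$ bound; tracking constants carefully then gives the claimed $6kn^{-1/7}$.

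The main obstacle lies in this second stage: for each probabilistic estimate in the~\cite{RRR} proof I would have to argue that conditioning on the upward-closed event~$E$ cannot distort the distribution $\Rpq$ by more than a small multiplicative constant. Given the semi-independent nature of the random choices that build a restriction (the inclusion of nodes in $Z$ in step R1, the uniform choice of partial injection $f_i$, and the independent colouring of paths in step R3) and the upward closedness of $E$, I would expect the adaptation to be largely mechanical, but the book-keeping for each step of the original proof is unavoidable. An alternative route, should the direct adaptation prove delicate, would be to exhibit a coupling between $\Rpq$ conditioned on $E$ and $\Rpq$ itself, using the upward closedness of $E$ to ensure that the coupling only enlarges restrictions and hence can only increase the set of fixed CNFs.
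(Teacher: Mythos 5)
Your first reduction step introduces a genuine error. You claim it suffices to prove $\prob[B \text{ not fixed} \mid E] \leq 6kn^{-1/7}$, where $E$ is the event that $\rho$ fixes each $A_i$ to $e_i$. But this intermediate bound is false in general: the hypothesis only guarantees $\prob[\text{bad} \mid E] < 1/2$, so bad restrictions can carry up to nearly half of the conditional mass under $E$, and there is no reason such restrictions should fix $B$ (the fixing argument inherently relies on goodness — e.g.\ paths being short — to control the size and probability of the minimal extension that falsifies $B$). Concretely, if $\prob[\text{bad}\mid E]=1/3$ and no bad restriction in $E$ fixes $B$, then $\prob[B\text{ not fixed}\mid E] \ge 1/3 \gg 6kn^{-1/7}$. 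The inequality $\prob[\neg B \mid G\cap E] \le \prob[\neg B\mid E]/\prob[G\mid E]$ is valid but throws away the goodness event from the numerator, which is exactly what breaks things. The correct reduction is the tight identity $\prob[\neg B \mid G\cap E] = \prob[\neg B \wedge G\mid E]/\prob[G\mid E]$, so the right target is $\prob[(B\text{ not fixed})\wedge G \mid E] \le 6kn^{-1/7}$ — that is, you must keep goodness inside the numerator, which means you have not really stripped off the goodness conditioning.

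The second stage is also described too loosely to carry the weight of the proof. The paper does not run a per-clause argument with a ``constant-factor loss'' under an upward-closed conditioning event, nor a coupling. Instead it reruns the~\cite{RRR} counting argument verbatim: over $S = \{\rho \in F\cap G : \rho$ does not fix $B\}$ one defines the map $\theta(\rho)=\sigma'$ (a minimal legal extension falsifying $B$), which has $\prob[\theta(\rho)]/\prob[\rho]\ge \tfrac12 n^{1/7}$ and is at most $3k$-to-one. The single new ingredient is precisely the observation you correctly flagged — fixing is preserved by extension (Proposition~\ref{lem:preserve_fixing}) — but it is used in a very specific place: it guarantees $\theta(\rho)\in F$, so that on each piece $S_i$ of the $3k$-fold partition one gets $\sum_{\rho\in S_i}\prob[\theta(\rho)] \le \prob[F]$ rather than the trivial $\le 1$. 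This yields $\prob[S]\le 6kn^{-1/7}\prob[F]$ and then the claimed bound after dividing by $\prob[F\cap G]\ge \prob[F]/2$. So you have identified the key structural fact, but the mechanism by which it is used is a one-line injectivity argument, not a redo of the clause-by-clause estimates, and your decomposition needs to be repaired as above before it can connect to it.
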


\begin{proof}
If we remove the CNFs $A_i$, this is essentially the ``fixing lemma" of \cite[Lemma~5.9]{RRR} and our proof is almost identical.
Define 
\begin{gather*}
F = \{ \rho \in \mathcal{R}_{p,q} :
\textrm{$\rho$ fixes each $A_i$ to $e_i$} \} \\
G = \{ \rho \in \mathcal{R}_{p,q} :
\textrm{$\rho$ is good} \} \\
S = \{ \rho \in F \cap G : \rho \textrm{~does not fix~} B \} 
\end{gather*}
so that our $S$ is the intersection with $F$ of the set $S$ defined in \cite{RRR}. 
The assumption gives us that $\prob[F \cap G] / \prob[F] \ge 1/2$ and
our goal is to show that $\prob[S]/\prob[F \cap G]$ is small.

Every $\rho \in S$ does not falsify $B$ but does have 
some legal extension which falsifies $B$.
Exactly as in \cite{RRR} we define a function $\theta$ on $S$ by $\theta(\rho) = \sigma'$,
where~$\sigma'$ is a certain minimal legal extension of $\rho$. We have, over $\rho \in S$,
\begin{enumerate}
\item
$\prob[\theta(\rho)] / \prob[\rho] \ge \frac{1}{2} n^{1/7}$ 
\item
$\theta$ is at most $3k$-to-one
\item
$\theta(\rho) \in F$ (although it may happen that $\theta(\rho) \notin F \cap G$).
\end{enumerate}
Items 1 and 2 are proved as in \cite{RRR}. Item 3 is immediate from Proposition~\ref{lem:preserve_fixing}.

Now partition $S$ as $S_0, \dots, S_{3k-1}$ where
$S_i = \{ \rho \in S : \rho$ is the $i$th preimage of $\theta(\rho) \}$.
Then 
\begin{align*}
\prob [S_i]
=
\sum_{\rho \in S_i} \prob[ \rho ] 
= 
\sum_{\rho \in S_i} \prob[ \theta(\rho) ]  \frac{\prob[\rho]}{\prob[\theta(\rho)]}
&\leq
2n^{-1/7}  \sum_{\rho \in S_i} \prob[ \theta(\rho) ] \\
&\leq 
2n^{-1/7} \prob[F]
\end{align*}
where for the last inequality we use that $\sum_{\rho \in S_i} \prob[ \theta(\rho) ] \leq \prob[F]$, 
since $\theta$ is an injection from $S_i$ to $F$.
This step is the main difference from \cite{RRR}, 
which uses only that $\theta$ is an injection
from $S_i$ to $\mathcal{R}_{p,q}$, giving the weaker bound 
$\sum_{\rho \in S_i} \prob[ \theta(\rho) ]
 \leq \prob[\mathcal{R}_{p,q}] = 1$.

It follows that 
$\prob[S] 
= \prob[S_0] + \dots + \prob[S_{3k-1}]
\leq 6kn^{-1/7} \prob[F]$.
Hence
 $\prob[S]/\prob[F \cap G] \leq 12kn^{-1/7}$ as required.
\end{proof}


\begin{lemma} \label{lem:fix_tree}
Consider a complete binary decision tree in which
each internal node is labelled with a $k$-CNF 
and has outgoing edges for NO and YES answers.
A node $z$ and a restriction $\rho$ are 
\emph{compatible} if $\rho$ is good and, for every CNF~$B$ on the path down
from the root to $z$, $\rho$ fixes $B$ to the value specified by the outgoing edge along the path.

Let $\epsilon = \prob [\rho$ is bad$]$. 
A node $z$ is \emph{big}
if \mbox{$\prob[\rho$ is compatible with $z]>\epsilon$}.
Let $S_d$ be the set of good  restrictions $\rho$
which are compatible with some big node at depth $d$.
Then 
\[
\prob[S_d] \ge
1-d  \cdot 12kn^{-1/7} - 2^{d+1} \epsilon.
\]
\end{lemma}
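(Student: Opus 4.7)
The plan is induction on $d$. For the base case $d=0$ there is only the root, whose path from itself is empty, so every good restriction is compatible with it. If $\epsilon \ge 1/2$ the claimed bound is vacuous, and otherwise $\prob[\text{root compatible}] = 1 - \epsilon > \epsilon$ shows the root is big, giving $\prob[S_0] = 1 - \epsilon \ge 1 - 2\epsilon$.

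For the inductive step I would record two structural observations. First, a restriction $\rho$ can be compatible with at most one node at each depth, since compatibility with two distinct nodes would force $\rho$ to fix the CNF at their least common ancestor's branching to two different values. Second, if $z'$ at depth $d+1$ is big then its parent $z$ at depth $d$ is also big, because compatibility with $z'$ entails compatibility with $z$, so $\prob[T_z] \ge \prob[T_{z'}] > \epsilon$; hence $S_{d+1} \subseteq S_d$ and $\prob[S_{d+1}] = \prob[S_d] - \prob[S_d \setminus S_{d+1}]$.

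Given $\rho \in S_d$ with associated big node $z(\rho)$ at depth $d$ and CNF label $B$, the event $\rho \notin S_{d+1}$ breaks into (I) $\rho$ does not fix $B$, and (II) $\rho$ fixes $B$ to some $b$ but the child $z(\rho)_b$ is not big. For (II), each non-big $z'$ at depth $d+1$ contributes at most $\epsilon$ by definition and there are at most $2^{d+1}$ such nodes, giving a bound of $2^{d+1}\epsilon$. For (I), I would apply Lemma~\ref{lem:conditional_fixing} with the $(A_i,e_i)$ being the path labels down to $z$ and the target CNF being $B_z$; this yields $\prob[T_z \cap \{\rho \text{ does not fix } B_z\}] \le 12kn^{-1/7}\prob[T_z]$, and summing over the pairwise disjoint events $T_z$ at depth $d$ (disjoint by the first observation) gives at most $12kn^{-1/7}$ in total. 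Combining, $\prob[S_{d+1}] \ge \prob[S_d] - 12kn^{-1/7} - 2^{d+1}\epsilon$, and the inductive hypothesis then yields $1 - (d+1)\cdot 12kn^{-1/7} - 2^{d+2}\epsilon$, matching the target for depth $d+1$.

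The main obstacle is verifying the hypothesis of Lemma~\ref{lem:conditional_fixing} in the application above: one needs $\prob[\rho \text{ bad} \mid \rho \text{ fixes each } A_i \text{ to } e_i] < 1/2$. This is precisely what the threshold ``$> \epsilon$'' in the definition of a big node is engineered to deliver: the numerator is at most $\prob[\rho \text{ bad}] = \epsilon$, while the denominator equals $\prob[T_z] + \prob[\rho \text{ bad and fixes all } A_i]$, and bigness of $z$ gives $\prob[T_z] > \epsilon \ge \prob[\rho \text{ bad and fixes all } A_i]$, forcing the conditional probability below $1/2$. Once this is in hand, the rest is bookkeeping; in particular the disjointness of the $T_z$ at each depth is what prevents (I) from blowing up with the number of big nodes.
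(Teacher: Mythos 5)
Your proof is correct and follows essentially the same route as the paper: induction on depth, applying Lemma~\ref{lem:conditional_fixing} at each level with the bigness threshold supplying the conditional hypothesis, and a $2^{d+1}\epsilon$ correction for non-big children. Your bookkeeping differs superficially (you drop the $\prob[S_d]$ multiplier on the $12kn^{-1/7}$ term, yielding a marginally weaker but still sufficient per-step bound), and you usefully make explicit the verification that $\prob[\rho~\textrm{bad}\mid\rho~\textrm{fixes each}~A_i]<1/2$ and the facts that each $\rho$ is compatible with at most one node per depth and that $S_{d+1}\subseteq S_d$, which the paper leaves implicit.
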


This will be used in the next section, where the decision tree
will model a computation of a $\P^\NP$ machine. In particular $d$
and $k$ will be polylogarithmic in $n$ and $\epsilon$ will be exponentially
small in $n$. It follows from the lemma that at least one node on the
bottom level, and thus at least one computation of the machine,
 is compatible with some $\rho$.

\begin{proof}
We use induction on $d$.
For the base case $d=0$, first observe that every good restriction
is compatible with the root. It follows
that the root is big, as \changed{thanks to Lemma \ref{lem:prob_bad}} we may assume that $\epsilon < 1/2$.
Hence $S_0$ is just the set of good restrictions.

At depth $d$ in the tree, by the definition of compatibility
each restriction in~$S_d$ is compatible with exactly one big node. 
Consider any such big node~$z$.
It is labelled with a $k$-CNF $B$ and has a NO
child $z_0$ and a YES child~$z_1$. Define~$P_z$ as
\[
\prob[\textrm{$\rho$ is not compatible with either $z_0$ or $z_1$
$|$ $\rho$ is compatible with $z$}]
\]
This is equal to the probability that $\rho$ does not fix $B$, under
the condition that $\rho$ is good and \changed{$\rho \in C_z$, where $C_z$ is the
set of restrictions which correctly fix all CNFs above $z$.
Notice that $\prob[\textrm{$\rho$ is bad}]=\epsilon$,
so in particular $\prob[\rho$~is bad and $\rho \in C_z] \le \epsilon$.
On the other hand
$\prob[\rho$ is good and $\rho \in C_z] > \epsilon$, since $z$ is big.
Hence of restrictions $\rho \in C_z$, the fraction which are good is at least $1/2$.
This is the condition we need to apply Lemma~\ref{lem:conditional_fixing}, which
gives~$P_z < 12kn^{-1/7}$. }


Hence, summing over big nodes at level $d$, the probability that $\rho$ is compatible with some (not necessarily big) 
node at depth $d+1$
is at least
\[
\sum_{z \in \{0,1\}^d, \textrm{\! $z$\! big}} (1-P_z) \prob[\textrm{$\rho$ is compatible with $z$}]
\ge (1-12kn^{-1/7}) \prob[S_d].
\]
To obtain $S_{d+1}$ we must finally remove the restrictions which are compatible with 
non-big nodes
at depth $d+1$. But there are at most $2^{d+1}$ such nodes, so the probability of being compatible
with any of them is at most $2^{d+1} \epsilon$. A straightforward calculation shows that
\[(1-12kn^{-1/7})(1-d  \cdot 12kn^{-1/7} - 2^{d+1} \epsilon) - 2^{d+1} \epsilon \ge 1-(d+1) \cdot 12kn^{-1/7} - 2^{d+2} \epsilon ,\]
which completes the inductive step.
\end{proof}

\section{Non-reducibility} \label{sec:non-reducibility}

Consider a restriction~$\rho$ and a $\Sigma^b_1$ formula
$\eb{y}{t} \theta(a,y)$, where $\theta$ is a $\PV$ formula
and $a$ is some number. We say that this formula \emph{is witnessed} in  
$\rho$ if there is some $b<t$ such that $\theta(a,b)$ holds
in $\rho$. That is, if you run the computation verifying $\theta(a,b)$
and answer queries to $\alpha$ with values from $\rho$,
these values are all defined and the computation is accepting.

Recall \changed{from Definition \ref{def:P_NP_computation}}
that a precomputation of a $\P^\NP$ machine 
contains a correct witness for every YES reply, but may be wrong
about NO replies.

\begin{definition}
Let $\rho$ be a restriction.
A precomputation $w$ of a $\P^\NP$ machine~$M$ is \emph{fixed} by $\rho$
if both of the following hold.
\begin{enumerate}
\item
For every $\NP$ query in $w$ with a YES reply, the witness 
 provided by $w$ is correct in $\rho$.
\item
No $\NP$ query  in $w$ with a NO reply is witnessed in any legal $\sigma \supseteq \rho$.
\end{enumerate}
We  say that $\rho$ \emph{fixes a precomputation of $M$ on input $v$}
if there is some such~$w$.
For a function $f$ computed by a $\P^{\NP}$ machine,
we write $\rho \Vdash w \colon f(x)=y$ if $\rho$ fixes a precomputation
$w$ of $f$ on input $x$ that outputs $y$, and we write
$\rho \Vdash f(x)=y$ if $\rho$ fixes some such $w$.
\end{definition}


If $w$ is fixed by $\rho$ then $\rho$ fixes, in the sense of Definition \ref{def:fixing}, 
each DNF representing an $\NP$ query made in $w$. Note that $w$ does not
have to be a computation of $M$ relative to any complete oracle $\alpha$ extending $\rho$. 
\changed{In fact, if $\rho$ is legal and $w$ contains a query like ``is there a witness to CPLS?'', 
then $w$ \emph{cannot} be a computation of $M$ relative to any complete $\alpha$, 
because $\alpha$ will contain a witness to a YES answer, but the answer given in $w$ will be NO.}

The symbol $\Vdash$ is intentionally chosen to be the same one as in forcing. In fact, 
one could formulate the concept of fixing in terms of a forcing relation, with the restrictions as forcing conditions.
However, attempting to preserve all the trappings of forcing in the context of finite combinatorics leads to some
annoying issues, so in this paper we do not explore this possibility further.

\begin{lemma} \label{lem:single_valued}
For a $\P^{\NP}$ function $f$, a restriction $\rho$ and an input $x$,
there is at most one $y$ such that $\rho \Vdash f(x)=y$.
\end{lemma}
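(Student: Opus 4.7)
The plan is to proceed by contradiction: I would assume that $\rho$ fixes two precomputations $w_1$ and $w_2$ of $f$ on input~$x$ whose outputs $y_1$ and $y_2$ differ, and derive an impossibility. By clause~\ref{def:P_NP-part1} of Definition~\ref{def:P_NP_computation}, the $i$th query asked by $M$ depends only on the input and the sequence of previous YES/NO replies, not on the recorded witnesses; the same is true of the eventual output of the machine. Consequently, if $w_1$ and $w_2$ produce different outputs, they must also differ at some position in their YES/NO reply sequence. I would then pick the first such position, say $i$, and observe that the $\NP$ query $\phi$ asked at step $i$ is the same in both precomputations.

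The heart of the argument would then be to play the two fixing conditions against each other at this single query $\phi$. Without loss of generality $w_1$ answers YES and $w_2$ answers NO. Fixing condition~1 applied to $w_1$ tells me that the witness $w_1$ records for $\phi$ is correct in~$\rho$; since the check that the witness works is a $\PV$ computation whose behaviour is preserved when more bits of the oracle are revealed, $\phi$ is witnessed in every extension of $\rho$. Fixing condition~2 applied to $w_2$ tells me the opposite, namely that no legal $\sigma \supseteq \rho$ witnesses $\phi$. These two statements contradict each other as soon as $\rho$ has at least one legal extension.

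The only mildly delicate step is to ensure that this last caveat is harmless. In every application of the $\Vdash$ notation in the paper, $\rho$ will be either a good restriction from $\Rpq$ (which is legal by construction, see Definition~\ref{def:CPLS_restriction} and the discussion around Lemma~\ref{lem:prob_bad}) or a legal extension of such a restriction supplied by the Adversary in the argument of Section~\ref{sec:non-reducibility}; since a legal restriction is trivially its own legal extension, the assumption needed above is met. Hence two fixed precomputations must agree on every YES/NO reply, and therefore on their output, which is exactly the statement of the lemma.
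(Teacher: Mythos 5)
Your proof is correct and follows essentially the same reasoning as the paper's terse two-sentence proof: that the query sequence and output of a $\P^\NP$ precomputation depend only on the YES/NO reply sequence, and fixed precomputations must agree on that sequence because a YES (with a witness correct in $\rho$) at a query where another precomputation answers NO would give a legal extension of $\rho$ witnessing the query, contradicting the second fixing condition. You also helpfully spell out the implicit hypothesis that $\rho$ admit a legal extension (which the paper's one-liner leaves tacit), correctly observing that it is satisfied wherever the lemma is invoked.
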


\begin{proof}
The progress of a $\P^{\NP}$ precomputation depends only on the 
YES/NO replies to $\NP$ queries, not on the witnesses chosen.
In all precomputations of $f(x)$ fixed by $\rho$ these replies are necessarily the same.
\end{proof}

Below a ``suitable" $n$ is one for which $n^{1/7}$ is a power of two.

\begin{lemma} \label{lem:fixed_computations}
Let $M$ be a $\P^\NP$ machine, running on inputs $x$ with $|x|$ polylogarithmic in $n$.
For all suitable large \changed{enough} $n$, for every such input $x$,
\[
\prob_{\rho \sim \Rpq}[\textrm{$\rho$ fixes a precomputation of $M$ on $x$}]
\ge 1-n^{-1/6}.
\]
\end{lemma}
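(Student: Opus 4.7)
The plan is to reduce to Lemma~\ref{lem:fix_tree} by modelling the $\P^{\NP}$ computation of $M$ on $x$ as a complete binary decision tree $T$. Each internal node corresponds to a step of the computation at which $M$ asks an NP query, and the two outgoing edges record the YES and NO answers. Definition~\ref{def:P_NP_computation} guarantees that the query asked at a given step depends only on the sequence of previous YES/NO replies (and not on the witnesses supplied), so the decision-tree representation is faithful. The query at each node has the shape $\eb{y}{t(q)}\theta(q,y)$, which we regard as a DNF by ranging over all potential witnesses $y$; the node itself is then labelled by the CNF negation of this DNF, so that a YES reply corresponds to the labelling CNF being fixed to $0$ and a NO reply to its being fixed to $1$.

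Because $|x|$ is polylogarithmic in $n$, the running time of $M$ is polylog in $n$, and therefore both the depth $d$ of $T$ and the lengths $|q|$, $|y|$ of every query and candidate witness appearing in $T$ are polylog in $n$. Each $\theta(q,y)$ is thus a polynomial-time formula evaluated on polylog-size inputs and reads at most $k = \mathrm{polylog}(n)$ bits of $\alpha$, so it can be written as a DNF of width $k$ over those bits; disjoining over all $y<t(q)$ preserves this width, so every node label is a $k$-CNF with $k$ polylog in $n$. Using $\epsilon = 2^{-\Omega(n)}$ from Lemma~\ref{lem:prob_bad} and plugging these $d$ and $k$ into Lemma~\ref{lem:fix_tree}, we obtain
\[
\prob_{\rho \sim \Rpq}[\rho \in S_d] \;\ge\; 1 - d \cdot 12 k\, n^{-1/7} - 2^{d+1}\epsilon,
\]
which delivers the stated bound $1 - n^{-1/6}$ for all suitable large enough $n$, since $d \cdot 12 k$ is polylog in $n$ and $2^{d+1}\epsilon$ is still exponentially small.

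It remains to translate compatibility with a depth-$d$ leaf $z$ of $T$ into an actual precomputation of $M$ fixed by $\rho$. The path to $z$ specifies a sequence of YES/NO replies, and compatibility means $\rho$ is good and fixes each CNF on the branch to the matching truth value. Where the branch takes the YES edge, the CNF is fixed to $0$, so $\rho$ itself satisfies the DNF $\eb{y}{t(q)}\theta(q,y)$ and some explicit witness $y$ already lies in the domain of $\rho$; we insert such a witness into the precomputation~$w$. Where the branch takes the NO edge, the CNF is fixed to $1$, which is exactly the requirement that no legal extension of $\rho$ witnesses the query. Together with the queries and replies read off from the branch, this produces a precomputation $w$ of $M$ on input $x$ fixed by $\rho$ in the required sense, completing the proof.

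The only real obstacle is the reduction to Lemma~\ref{lem:fix_tree}: one has to verify that the adaptively chosen NP queries can simultaneously be represented, as CNFs of genuinely polylog width, at the nodes of a single decision tree. This rests squarely on the hypothesis that $|x|$ is polylog in $n$, which propagates to polylog bounds on $|q|$ and $|y|$ and hence on the number of $\alpha$-bits any single $\theta(q,y)$ can read; the rest is bookkeeping.
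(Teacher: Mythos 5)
Your proposal matches the paper's proof essentially step for step: model the adaptive NP queries as a decision tree with $k$-CNF labels (negations of the query DNFs), observe that the polylog bound on $|x|$ makes both the depth $d$ and the width $k$ polylogarithmic in $n$, invoke Lemma~\ref{lem:fix_tree} together with Lemma~\ref{lem:prob_bad} to get the $1 - n^{-1/6}$ bound, and then read off a fixed precomputation from the branch to a compatible leaf by extracting explicit witnesses for YES replies. The argument is correct and there is no substantive difference from the paper's treatment.
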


\begin{proof}
We can model a run of $M$ on $v$
as a decision tree $\mathcal{T}_M$. The height~$d$ of $\mathcal{T}_M$ is bounded by the running time of $M$. 
At each node the tree makes an $\NP$ query; by negating the reply, we can view
this as a query to a \mbox{$k$-CNF}, where $k$ is some obvious syntactic upper bound 
on the time needed to verify a witness to the query. Since $M$ is a $\P^\NP$ machine,
$k$ can be chosen polynomial in the running time of $M$.
So we can apply Lemma~\ref{lem:fix_tree} with $k=d=|n|^c$ for some~$c \in \N$. 
This gives the lower bound
\begin{equation}\label{eq:tree-bound}
1-|n|^{2c} n^{-1/7} - 2^{|n|^c+1} \epsilon
\end{equation}
on the probability that $\rho$ is compatible with
one of the leaves of $\mathcal{T}_M$. 
By Lemma~\ref{lem:prob_bad} the probability $\epsilon$ that 
$\rho$ is bad is exponentially small in~$n$, so the bound in (\ref{eq:tree-bound}) 
is at least $1-n^{-1/6}$ for $n$ sufficiently large. 

Finally, suppose $\rho$ is compatible with a leaf of $\mathcal{T}_M$.
We form a precomputation $w$ by answering queries with the replies
given on the path from the root to the leaf. For each YES reply, it follows
from the definition of fixing a DNF to~1 (that is, fixing~a CNF to 0) that $\rho$ provides
enough information to verify at least one witness to the reply; 
we make some such witness part of $w$.
\end{proof}

\begin{lemma}\label{lem:good-rho}
Let a search problem in $\rWPHP_2$ be given by $\P^\NP$
functions $f_x(u)$ and $g_x(v)$. 
Let \changed{$s(n)$} be quasipolynomial in $n$.
Then for all suitable large \changed{enough}~$n$, 
\[
\prob_{\rho \sim \Rpq}[\textrm{there exist $v \neq v'$ such that $\rho \Vdash f_s(g_s(v)) = v'$}] \ge 1-3n^{-1/6}.
\]
\end{lemma}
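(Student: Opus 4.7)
The plan is to apply Lemma~\ref{lem:fixed_computations} to the $\P^\NP$ machine computing the composition $f_s \circ g_s$. Because $s$ is quasipolynomial in $n$, each input $v < 2s$ and the hard-coded parameter $s$ have length polylogarithmic in~$n$, so the lemma yields, for each fixed $v<2s$,
\[
\prob_{\rho \sim \Rpq}[\rho \text{ fixes a precomputation of } f_s(g_s(v))] \ge 1-n^{-1/6}.
\]
By Lemma~\ref{lem:single_valued}, when $\rho$ fixes such a precomputation the output $\phi(v)$ is uniquely determined, so these events define a partial function $\phi$ on $[0,2s)$.

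A first-moment argument then shows that $\phi$ is defined on most inputs simultaneously. Let $X$ count the $v<2s$ for which $\rho$ does \emph{not} fix a precomputation; then $E[X] \le 2s\cdot n^{-1/6}$, and Markov's inequality gives $\prob[X \ge s] \le 2n^{-1/6}$. Hence with probability at least $1-2n^{-1/6}$ there is a set $V \subseteq [0,2s)$ of size $|V| > s$ on which $\phi$ is defined.

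To finish I would show, by pigeonhole, that $\phi(v) = v$ cannot hold throughout $V$. Suppose for contradiction that it does. Every $\rho \in \Rpq$ is legal by construction, so $\rho$ can be extended to some legal total assignment $\sigma$. For each $v \in V$, the fixed precomputation $w_v$ has its YES queries witnessed by $\rho \subseteq \sigma$ and its NO queries unwitnessable in any legal extension of $\rho$, hence in $\sigma$. Thus the genuine $\P^\NP$ computation of $f_s(g_s(v))$ relative to $\sigma$ reproduces $w_v$ query by query and outputs $\phi(v) = v$. But $g_s$ under $\sigma$ has range contained in $[0,s)$, so the map $v \mapsto f_s^\sigma(g_s^\sigma(v))$ takes at most $s$ distinct values on $V$, contradicting $|V| > s$. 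Hence some $v \in V$ has $\phi(v) \ne v$, giving the desired pair. Combining, this happens with probability at least $1 - 2n^{-1/6} \ge 1 - 3n^{-1/6}$.

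The only delicate step is the passage to a legal total extension $\sigma$ of~$\rho$: we need such $\sigma$ to exist at all and we rely on Definition~\ref{def:fixing}(2) to ensure that fixed NO answers remain NO under $\sigma$, so that the ``virtual'' value $\phi(v)$ is realised by the true machine on $\sigma$. Existence of legal extensions of any $\rho \in \Rpq$ should follow routinely from the structure of restrictions and the definition of legality inherited from~\cite{RRR}---any partial injection extends to a total injection, and any partial colouring extends to a consistent legal colouring. Once this is granted, the proof reduces to the Markov and pigeonhole steps above.
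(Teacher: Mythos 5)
Your first two steps -- applying Lemma~\ref{lem:fixed_computations} to the machine for $f_s \circ g_s$, and then a first-moment/Markov argument to get a large set $V$ of inputs on which $\rho$ fixes precomputations -- match the paper's proof (the paper asks for $2/3$ of all $2s$ inputs at cost $3n^{-1/6}$, you ask for $>s$ of them at cost $2n^{-1/6}$; both suffice). The problem is your pigeonhole step.

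You invoke a ``legal total assignment $\sigma$'' extending $\rho$, but no such object exists, and this is not a minor technicality. CPLS is a total search problem, so \emph{every} complete oracle has a witness; concretely, in any legal restriction the path starting at $(0,0)$ carries no colours (else item~(i) of CPLS is witnessed at $(0,0)$, or item~(ii) along the path), and once you totalize the oracle this path reaches some bottom node $(a-1,x)$ with $G_{a-1}(x,y)=0$ for all~$y$ while $u(x)$ is defined, which witnesses item~(iii). So the final clause of your proposal, asserting that ``any partial colouring extends to a consistent legal colouring,'' is false in exactly the case that matters, and the chain of reasoning through a genuine computation relative to $\sigma$ collapses.

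The fix is to run the pigeonhole at the level of the intermediate values rather than at the level of outputs, which avoids ever completing the oracle. Among the $s+1$ (or more) inputs $v$ on which $\rho$ fixes a precomputation, each fixed precomputation passes through some $u = g_s(v) < s$, so by pigeonhole two distinct inputs $v_i \neq v_j$ share the same intermediate value~$u$. The tails of those two precomputations are fixed precomputations of $f_s$ on the same input $u$, so by Lemma~\ref{lem:single_valued} they produce the same output $v'$; at least one of $v_i,v_j$ differs from $v'$, giving the desired pair directly and without any appeal to a total extension. This is what the paper does, and it is a strictly ``local'' argument about $\rho$, which is what makes it go through.
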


\begin{proof}
Choose $n$ sufficiently large. 
Let $M$ be the $\P^\NP$ machine which takes input~$n, v$ and computes 
$f_s(g_s(v))$ by first computing
$g$ and then $f$. As $s$ is quasipolynomial in $n$, we may assume
that $M$ satisfies the assumption of 
Lemma~\ref{lem:fixed_computations} on input size. We will write just
$f$ and $g$ below, suppressing the parameter $s$.

Consider the machine $M$ running on inputs $v<2s$.
By Lemma~\ref{lem:fixed_computations}, 
for any fixed~$v$, a random $\rho$ fixes 
a precomputation of $M$ on $v$ with probability at least $1-n^{-1/6}$.
It follows that with probability at least $1-3n^{-1/6}$
a random~$\rho$ simultaneously fixes precomputations for at least $2/3$ of
all inputs $v$ (as otherwise the fraction of pairs $(\rho, v)$ in which $\rho$ does not fix a precomputation on $v$ would be more than $n^{-1/6}$). 
Fix such a $\rho$.
 In particular, there are at least $s+1$ many distinct
inputs $v_0, \dots, v_s$ for which~$\rho$ fixes precomputations 
$w_0, \dots, w_s$. 

The machine $M$ first computes $u=g(v)$, 
which is necessarily less than~$s$, and then computes $f(u)$.
Hence, by the pigeonhole principle, 
there is some~$u$ for which there exist distinct $i,j$ such that 
$\rho \Vdash w_i \colon g(v_i)=u$
and $\rho \Vdash w_j \colon g(v_j)=u$.
(Here and below we are abusing our notation slightly, as~$w_i$ and~$w_j$ are really precomputations of $g$ followed by $f$.)

But, by Lemma~\ref{lem:single_valued}, 
there must be a single $v'$ such that $\rho \Vdash w_i: f(u)=v'$ 
and $\rho \Vdash w_j: f(u)=v'$.
At least one of $v_i$ and $v_j$ is distinct from~$v'$;
without loss of generality suppose $v_i$ is. 
Let $v=v_i$ and $w=w_i$. Thus
we have $\rho \Vdash w: f(g(v)) = v'$ and $v'\neq v$,
as required.
\end{proof}

Now consider the following  Prover-Adversary game, given by
an $\NP$ search problem $Q(x,y)$ and a $\Sigma^p_2$ search problem $R(x',y')$.
At the start of the game, the Prover queries $R$ for some input $x'$, 
with $|x'|$ polynomial  in $|x|$, and the Adversary
gives a reply $y'$. Then the Prover repeatedly queries bits of the oracle~$\alpha$, and the
Adversary replies. The Prover is limited in the number of bits of $\alpha$ he can remember at once, and can also forget bits to save memory. 
The Prover wins when the partial oracle in his memory
either witnesses $Q(x,y)$ for some~$y$, or witnesses that $R(x',y')$ is false.
This game models $\PLS$ counterexample reducibility, in the following sense.

\begin{lemma} \label{lem:P_A_game}
Suppose an $\NP$ search problem $Q(x,y)$ is \changed{cleanly}
$\PLS$ counterexample
reducible to a $\Sigma^p_2$ search problem $R(x',y')$. Then for all inputs $x$ the Prover
can win the  game
using only polynomially many (in $|x|$) bits of memory.
\end{lemma}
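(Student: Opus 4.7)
The plan is to have the Prover directly simulate the PLS counterexample reduction. Given input $x$, the Prover would first issue his single $R$-query on argument $x' = e(x)$, receiving some reply $y'$ from the Adversary; he then sets $x'' = \langle x, y' \rangle$ and tries to find a solution $y''$ to the PLS instance $P(x'', \cdot)$. By Definition~\ref{def:PLS_counterexample}, once $y''$ is in hand the value $d(x, y'')$ is guaranteed to be either an NP-witness to $\neg R(e(x), y')$ or an NP-witness to $Q(x, \cdot)$, and either one can be certified by polynomially many further oracle queries.

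To find $y''$ the Prover performs the standard iterative descent for PLS. Starting at $y_0'' = 0 \in F_{x''}$, he repeatedly sets $y_{k+1}'' = N_{x''}(y_k'')$ while testing whether $N_{x''}(y_k'') \notin F_{x''}$ or $C_{x''}(N_{x''}(y_k'')) \geq C_{x''}(y_k'')$; when the test succeeds, $y_k''$ is a PLS solution and the descent halts. Each iteration uses only polynomially many oracle bits, namely those queried by the polynomial-time machines for $F_{x''}, N_{x''}, C_{x''}$ at the current node and its neighbour. The Prover queries these bits, performs the test, and then discards them, retaining only the short integer $y_k''$ for the next iteration. Adversary consistency ensures that any bit re-queried later returns the same reply, so the descent is correct, and the standard PLS argument (costs cannot decrease forever) guarantees termination at some $y''$ after at most exponentially many steps.

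With $y''$ found, the Prover computes $d(x, y'')$, another polynomial-time operation using polynomially many oracle bits. By the PLS counterexample reduction at least one of the two options in Definition~\ref{def:PLS_counterexample} must hold, so the Prover attempts to verify each in turn -- in each case a polynomial-time check with polynomially many oracle queries -- and retains in his memory only the bits used by the successful verification. These bits realize one of the two winning conditions of the game.

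The main obstacle, and really the only thing to verify carefully, is the memory bookkeeping: even though the PLS descent may run for exponentially many steps, the Prover's oracle memory at any single moment should stay bounded by the polynomial number of bits needed for a single subcomputation (one descent step or one final verification), together with the negligible integer state carried between steps. This follows because no individual subcomputation depends on the oracle bits used by an earlier, already-completed subcomputation, so forgetting bits between steps is safe.
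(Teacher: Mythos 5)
Your overall strategy matches the paper's: the Prover asks $R$ on $e(x)$, receives $y'$, sets $x''=\ang{x,y'}$, and then simulates a PLS descent for $P(x'',\cdot)$ step by step, finishing with a verification of $d(x,y'')$. However, there is a genuine gap in the memory-bookkeeping step, which you yourself flag as ``the only thing to verify carefully.'' You assert that at each descent step the Prover may ``discard'' all oracle bits and retain ``only the short integer $y_k''$,'' appealing to ``Adversary consistency'' to make the descent correct. But in the Prover--Adversary game as it is used in this paper, the Adversary is \emph{not} committed to a fixed oracle: once the Prover forgets a bit, the Adversary may later answer that bit differently (this is exactly what happens in the proof of Theorem~\ref{thm:cpls-not-in-ourclass}, where the Adversary's answers come from a legal $\sigma$ that she is free to modify outside of $\rho$ and the Prover's current memory). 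Against such an adaptive Adversary, discarding the cost-witnessing bits is fatal: when the Prover re-queries $C_{x''}(y_{k+1}'')$ at the next step, the Adversary can now claim a large cost, then steer the walk back into a cycle, so the descent need not terminate.

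The paper's proof closes this gap by having the Prover retain across steps precisely the oracle bits that fix the cost and domain membership of the current solution, the computation of its neighbour, and the neighbour's cost. With those bits held in memory, the cost witnessed by the Prover is forced to strictly decrease at each move, regardless of whether the Adversary later contradicts herself about forgotten bits, and termination follows from boundedness of the cost. This is still only polynomially many bits at a time, so the memory bound is unaffected. Your argument would be correct if the game were defined against a fixed oracle, but that is not the notion needed here; you should replace the appeal to ``Adversary consistency'' with the invariant that the cost-certifying bits are carried from one step to the next.
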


\begin{proof}
This is an immediate consequence of the definitions of PLS counterexample
reducibility (Definition~\ref{def:PLS_counterexample}) and  PLS. Using the notation
of Definition~\ref{def:PLS_counterexample},
the Prover first asks for~$y'$ such that $R(e(x), y')$. 
\changed{Since the reduction is clean, the value of $e(x)$ does not depend on any oracle queries
(Definition~\ref{def:clean_reducibility}).}
He then 
sets $x'' = \ang{x,y'}$ and
simulates the (exponential time, but polynomial memory) task of solving the PLS problem~$P(x'',y'')$
by starting with $y''=0$ and then repeatedly setting~$y''$ to~$N_{x''}(y'')$,
finding domain elements of smaller and
smaller cost, until either the costs stop decreasing or $y''$ leaves the domain $F_{x''}$.
\changed{Once~$y''$ satisfying $P(x'', y'')$ is found,
the Prover simulates the polynomial-time computation of~$y := d(x,y'')$.
Then he simulates two additional polynomial-time computations
in order to check whether $Q(x,y)$ holds and whether $y$ witnesses that $R(x',y')$ is false.
By the definition of PLS counterexample reducibility, one of the two possibilities must hold,
thus allowing the Prover to win the game.}

\changed{During the entire game, the Prover} never needs to remember more
bits of the oracle than are necessary to fix simultaneously the 
cost and membership of the domain of one \changed{candidate solution to $P$},
the computation of its neighbour, the cost of the neighbour, 
and possibly \changed{computations of $d$ and} the witnessing for $Q$ and~$R$.
\end{proof}

We can now prove Theorem~\ref{thm:cpls-not-in-ourclass}, that CPLS is not in the class $\APPROX$.

\begin{proof}[Proof of Theorem ~\ref{thm:cpls-not-in-ourclass}]
Assume that CPLS is in $\APPROX$. \changed{Then by Corollary~\ref{cor:clean} it is 
cleanly PLS counterexample reducible to an instance of $\rWPHP_2$  
given by some functions $f$ and~$g$.}
This means that, by Lemma~\ref{lem:P_A_game}, 
the Prover can win the Prover-Adversary game 
in which $Q$ is $\CPLS$ and $R$ is $\rWPHP_2$, 
using only polynomially many bits of memory.
We obtain a contradiction by describing a strategy for 
the Adversary that defeats any Prover with small memory.

The Prover first makes his query $x'$ to $\rWPHP_2$. 
The Adversary then picks a restriction 
$\rho$ from $\Rpq$ for which there exist a precomputation $w$ and 
numbers $v,v' < 2x'$ with $v\neq v'$ such
that $\rho \Vdash w: f_{x'}(g_{x'}(v)) = v'$. By Lemma~\ref{lem:good-rho} 
such a $\rho$ exists, and by Lemma~\ref{lem:prob_bad} we may
further assume that it is good. The Adversary replies with $\ang{v,w}$.

Then, using the \changed{goodness of $\rho$ and the limited size} of the Prover's memory,
the Adversary is able to have in hand throughout the game a legal $\sigma \supseteq \rho$
which contains all bits in the Prover's current memory. 
Such a $\sigma$ can never witness $\CPLS$, because it is legal.
However, a legal $\sigma$ also cannot witness that $\ang{v,w}$ 
is not a solution to $\rWPHP_2$, 
because the only way to do this would be to witness
that one of the NO replies in $w$ is wrong, which is
impossible by the choice of~$\rho$.
The details of the strategy are as in the proof
of \cite[Theorem~5.10]{RRR}.
\end{proof}

\section{Reformulation in propositional logic}
\label{sec:propositional}

In this section we sketch another way of presenting our main result about bounded arithmetic, 
that CPLS, considered as a $\forall \Sigma^b_1$ principle, is not provable in $\APC_2$. 
We will use propositional proof complexity and in particular the well-known
Paris-Wilkie translation of relativized bounded arithmetic  into propositional logic~\cite{paris_wilkie}.

Suppose $\phi$ is bounded formula of $\LPV$, and that we have
specified values~$\bar{n}$ for all free variables in $\phi$. 
We can write a propositional formula $\ang{\phi}$ with the same semantics as $\phi$,
if we interpret propositional variables $x_n$ as bits $\alpha(n)$ of the oracle.
Below we will use \emph{narrow} to mean ``of width polylogarithmic in $\bn$''.

If $\phi$ does not mention the oracle $\alpha$, then its translation $\ang{\phi}$ is the 
propositional constant $\top$ or $\bot$, depending on whether~$\phi$ is true or
false in $\N$. If $\phi$ is~$\alpha(n)$, then $\ang{\phi}$ is the propositional variable $x_n$.
If $\phi$ is a $\PV$ formula, then $\ang{\phi}$ is a narrow CNF 
--- we can take it to be the conjunction of clauses expressing ``some oracle reply in $w$ is false'' 
over all possible rejecting computations $w$ of the polynomial-time machine deciding~$\phi$.
If~$\phi$ is a $\Pi^b_1$ formula $\ab{x}{n} \theta(x)$, 
then again $\ang{\phi}$ is a narrow CNF,
namely the conjunction, over $m<n$, of the translations $\ang{\theta(x)}$ with $x \mapsto m$.

The translation theorem we will use follows from the translation of $T^1_2$ into treelike Res(log) refutations from~\cite{krajicek:WPHP} and~the connection between treelike Res(log) and narrow resolution~\cite{Lauria:treelike_narrow}.
It can also be shown via PLS witnessing, as described in~\cite{BKT2014}.

\begin{theorem} \label{the:PW_translate}
Let $\phi(\bar{n})$ be a $\Pi^b_1$ formula and suppose $T^1_2 \vdash \forall \bar{n}\, \neg \phi(\bar{n})$.
Then the translations $\ang{\phi}$ have narrow resolution refutations.
\end{theorem}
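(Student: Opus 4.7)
The plan is to follow the two-step route sketched in the excerpt, pairing Krajíček's translation of $T^1_2$ into treelike Res(log) with Lauria's simulation of treelike Res(log) by narrow resolution. First I would verify that for $\phi \in \Pi^b_1$ the translation $\ang{\phi}$ really is a narrow CNF: this is immediate from the inductive clauses of the Paris--Wilkie translation, since the matrix of $\phi$ is a $\PV$-formula whose rejecting computations have polylogarithmic length in $\bn$, so each clause of $\ang{\phi}$ is polylogarithmically wide, and the outer universal quantifier only takes a conjunction of polynomially many such CNFs.

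Next I would invoke Krajíček's translation theorem for $T^1_2$: given a $T^1_2$-proof of $\forall \bar n\, \neg\phi(\bar n)$, one obtains polynomial-size treelike Res(log) refutations of $\ang{\phi}$, uniformly in $\bar n$. The argument is by induction on the structure of the $T^1_2$-proof, reducing $\Sigma^b_1$-LIND on polylogarithmic cuts to short treelike derivations over terms of $O(\log \bn)$ literals, and translating the quantifier-free axioms of $\PV$ into trivially refuted CNFs of the same width. Since $\ang{\phi}$ is already narrow, the resulting refutation is a treelike Res(log) refutation of a narrow CNF.

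Finally I would apply Lauria's result that a polynomial-size treelike Res(log) refutation of a narrow CNF can be converted into a narrow resolution refutation. The idea is that a treelike Res(log) proof can be viewed as a narrow decision-DAG on $O(\log \bn)$-sized terms, and its tree structure allows one to ``unfold'' each Res(log) step into a bounded number of resolution steps whose width is controlled jointly by the width of $\ang{\phi}$ and the $O(\log \bn)$ bound on the size of terms, both of which are polylogarithmic. The main obstacle is in this last conversion: one has to keep careful track of width through the simulation and ensure that the polylogarithmic bounds surviving from $\phi$ and from Res(log) combine additively rather than multiplicatively blowing up. A parallel route, also noted in the excerpt, goes through the PLS witnessing theorem for $T^1_2$: one extracts a PLS instance whose solutions refute $\ang{\phi}$, and builds a narrow resolution refutation directly from its neighbourhood and cost structure, encoded level by level in polylogarithmic width.
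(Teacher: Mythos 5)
Your proposal follows essentially the same route the paper takes: it cites Krajíček's translation of $T^1_2$ into treelike $\mathrm{Res}(\log)$ refutations and Lauria's simulation of treelike $\mathrm{Res}(\log)$ by narrow resolution, and notes the alternative proof via the PLS witnessing theorem. The paper gives exactly this (citation-level) argument and proves nothing more, so your reconstruction matches it.
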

 
Now suppose for a contradiction that $\APC_2 \vdash \CPLS$.
Consider 
\changed{$\CPLS$ as described} in Section~\ref{sec:fixing},
with parameters $a=b=n$ and $c=\lfloor n^{1/7} \rfloor$ 
and the structure of the problem given entirely by the oracle. 
Let $Q(n,y)$ assert that $y$ is a solution to \changed{CPLS on input $n$}.
We may bound $y$ by some term $t(n)$, such that
$\APC_2 \vdash \forall n \eb{y}{t} Q(n,y)$.
By the proof of Lemma~\ref{lem:APC_to_counterexample},
there exist $\P^{\NP}$ machines $f,g$ defining an instance of $\rWPHP_2$, and a term $s(n)$,  such that
\begin{multline*}
T^1_2 \vdash \forall n \ab{v}{2s} \forall w \, 
[w \textrm{ is not a computation of } f_s(g_s(v)) \lor \mathop{\mathrm{output}}(w) = v    \\
~~ \quad \lor \eb{y}{t} Q(n,y)]. \hspace{-10pt}
\end{multline*}

Let $M$ be the $\P^\NP$ machine which takes input $n, v$ and computes 
$f_s(g_s(v))$ by first computing $g$ and then $f$. 
We think of $v$ as the ``real input" to~$M$ and of~$n$ as a parameter,
and write $\Comp_M(v,w)$ for the $\Pi^b_1$ formula 
from Definition~\ref{def:P_NP_computation} expressing that $w$ is a computation of 
$M$ on input $v$.
Noting that the  expression on the right above
is~$\forall \Sigma^b_1$, we can apply Theorem~\ref{the:PW_translate}
to conclude that the family of narrow CNFs 
\[
\Phi_{n,v,w} :=
\ang{ v < 2s }
\wedge
\ang{\Comp_M(v,w)} 
  \wedge
  \ang{\mathrm{output}(w) \neq v}
\wedge
\bigwedge_{y<t} \ang{ \neg Q(n,y)}
\]
has narrow resolution refutations, that is, of width polylogarithmic in $n,v,w$.

Fix a suitable large $n$.
By definition, no legal restriction $\sigma$ can falsify any clause in the last conjunct
$\bigwedge_{y<t} \ang{ \neg Q(n,y)}$, as otherwise
for some $y$ there is an accepting computation of $Q(n,y)$ over $\sigma$,
so $\sigma$ witnesses CPLS.

By Lemma~\ref{lem:good-rho}, 
with high probability for a random $\rho$ from $\Rpq$
there exist
$v<2s$ and a precomputation~$w$ of $M$ on~$s$ with output$(w)\neq v$
such that $w$
is fixed by $\rho$, meaning that all witnesses in $w$ to YES answers 
are correct in $\rho$ and no query with a NO answer has a witness in any legal 
extension of $\rho$. It follows that \changed{for such $v,w$,}
no clause in the first three conjuncts \changed{of $\Phi_{n,v,w}$}
is false in any legal extension of $\rho$.
By Lemma~\ref{lem:prob_bad} we can pick a good $\rho$ for which such $v,w$ exist.

By the Prover-Adversary construction in the proof
of \cite[Theorem~5.10]{RRR}, we can exploit the limited width
of the refutation of $\Phi_{n,v,w}$ to find a legal extension of $\rho$
which falsifies one of the conjuncts of $\Phi_{n,v,w}$. This is 
a contradiction.

\section{Open problems}\label{sec:open-problems}
The \emph{random resolution} propositional proof system
was introduced in~\cite{BKT2014}. Very roughly speaking, 
a refutation of a CNF $F$ in this system is a refutation of $F \wedge A$,
where $A$ is any CNF which is true with high probability.

Suppose a sentence $\forall n \, \phi(n)$, with $\phi$ a $\Sigma^b_1$ formula, 
is provable in the subtheory of $\APC_2$ consisting
of $T^1_2$ together with the surjective WPHP only for 
polynomial time functions. It was
shown in~\cite{BKT2014} that this implies that the 
translations $\ang{\neg \phi(n)}$ have narrow refutations in random 
resolution.

\begin{open}
Is there a natural propositional proof system which captures,
in a similar way, the $\forall \Sigma^b_1$ consequences of 
full $\APC_2$?
\end{open}

Ideally, one would want to show not only that 
$\APC_2$ proofs translate into the system, 
but also something in the opposite direction, 
for example, that if $\ang{\neg \phi(n)}$ has 
small, suitably uniform refutations in the system,
then $\forall n \, \phi(n)$ is provable in $\APC_2$.
Some system with these properties could be constructed
using the Paris-Wilkie translation and our arguments in Section \ref{sec:propositional},
but it would be rather unnatural and awkward.

It is consistent with what we know that narrow random resolution,
or possibly random resolution with no width restriction, already provides a positive answer to Open Problem~1.
So, we can ask:
\begin{open}
Is there a $\forall \Sigma^b_1$ sentence which is provable in $\APC_2$
but whose propositional translations do not have narrow random resolution refutations?
\end{open}
A candidate is the Herbrandized ordering principle HOP,
which is provable in $\APC_2$~\cite{BKT2014} but not in the
subtheory mentioned above~\cite{atserias_thapen}.

What makes this problem interesting is that, so far, our only tool for
proving lower bounds on random resolution is the fixing
lemma of~\cite{RRR}. For a typical random restriction, 
it is a  small step from proving this
to proving our conditional fixing lemma
from Section~\ref{sec:fixing}, which implies unprovability in $\APC_2$.
But showing a separation seems to require finding a
principle and a random restriction for which one lemma holds, but not the other.
The restrictions used to show unprovability of HOP in~\cite{atserias_thapen}
may be useful here.

Finally we mention a rather obvious question: is every problem in $\APPROX$ reducible to CPLS?
This is subsumed in the old open problem, discussed in the introduction,
of separating the classes $\GI_k$ or the theories $T^k_2$:
it is possible that every search problem reducible to any $\GI_k$
is already reducible to CPLS.

\ignore{

By the well-known Paris-Wilkie translation \cite{}, relativized bounded arithmetic theories correspond
to certain propositional proof systems. 
Thus, it is not surprising that our unprovability result for the approximate
counting theory $\APC_2$ can also be stated in a propositional setting.
We outline that restatement below.

The propositional system corresponding to $\APC_2$ is, unfortunately, not
very easy to define. It is a system for refuting sets (conjunctions) of 
$\mathrm{Res(\log)}$ lines -- that is, disjunctions of polylog-width conjunctions.
A refutation of a formula $\varphi$ in the system is a \emph{treelike} $\mathrm{Res(\log)}$
refutation that is allowed to use some additional axioms corresponding to the
surjective weak pigeonhole principle. It is those axioms, which provide the system with much
of its reasoning power, that are somewhat awkward to describe.

Consider a $\P^\NP$ machine $M$ that makes its $\NP$ queries relative to an oracle
that specifies the variables of $\varphi$, say $p_1,\ldots,p_n$, as well as some additional
variables, say $q_1,\ldots,q_{2m}$ for some $m$. We can express the statement
``some bit-string $v$ of length $\log m+1$ is not in the range of $M$ on inputs $u$
of length $\log m$'' using a set of $\mathrm{Res(\log)}$ lines in the variables
$\bar p, \bar q$ and additional variables $v_1,\ldots,v_{\log m+1}$ for the bits of $v$.
For each pair of strings $a \in \{0,1\}^{\log m+1}, b \in \{0,1\}^{\log m}$ and each
precomputation $w$ of $M$ on $a$ that would lead to $M$ outputting $b$, 
there is a $\mathrm{Res(\log)}$ line expressing the NP property that 
either $v \neq b$ or $w$ is not a computation of $M$ on $a$. A refutation
in the $\APC_2$ system is a treelike $\mathrm{Res(\log)}$ refutation
that is allowed to use, as additional axioms, all such statements
for fixed $M$, $m$, and a \emph{fixed} bit-string substituted for the variables $\bar q$.

If a $\forall\Sigma^b_2$ statement is provable in the theory $\APC_2$,
then its propositional translations can be proved in this system by
quasipolynomial-size proofs (in particular, $m$ is at most quasipolynomially
large in the size of the formula being refuted). The system is quasipolynomially
simulated by depth-(1.5) Frege, but also able to give
quasipolynomial-size proofs of principles requiring large proofs in
treelike $\mathrm{Res(\log)}$ (like the graph ordering principle GOP \cite{})
or even in resolution (like the Ramsey principle RAM \cite{}). An analogue
of the system with low-degree polynomials over $\mathbb{F}_2$
substituted for variables is quasipolynomially equivalent
to constant-depth Frege with parity gates \cite{bkz:toda}.

The main result of this paper, stated in propositional terms,
is that the $\APC_2$ system described above requires large (in fact, exp-size)
proofs of the CPLS tautologies formulated in \cite{}. 
Thus the system is incomparable with resolution. 

The result is proved in the following stages, assuming that a small refutation of CPLS in the $\APC_2$ system 
is given:
\begin{enumerate}[(i)]
\item Using the logical machinery of Section \ref{sec:logic}, the extra axioms expressing the surjective WPHP for $\P^\NP$ 
functions are replaced by axioms expressing the \emph{retraction} WPHP for some $\P^\NP$
machines $M$ and $N$ such that $M$ maps $\log m+1$ bit strings to $\log m$ bits and 
$N$ does the opposite. The new axioms have free variables
for both $v \in \{0,1\}^{\log m+1}$ and a precomputation $w$ of $N \circ M$ on $v$.
They say: $w$ \emph{is} a computation of $N \circ M$ on $v$ and the output of $w$ is not $v$.
Crucially, this is now a family of polylog-width clauses.
\item By the relationship between treelike $\mathrm{Res(\log)}$ and 
narrow resolution (see e.g.~\cite{lauria:note}), it follows that CPLS combined
with the retraction WPHP axioms has short refutations in polylog-width resolution.
\item The combinatorial arguments of Sections \ref{sec:fixing}-\ref{sec:non-reducibility}, 
culminating in Lemma \ref{lem:good-rho}, make it possible to find a restriction $\rho$
to the variables of CPLS, a string $v \in \{0,1\}^{\log m+1}$ and a precomputation $w_0$
of $N\circ M$ on $v_0$ such that: 
(a) $\mathrm{CPLS}{\upharpoonright}_\rho$ is reasonably similar
to CPLS with somewhat smaller size parameters, 
(b) $\rho$ makes $w_0$ a potential
computation of $N \circ M$ on $v_0$, 
(c) the output of $w_0$ is not $v_0$,
(d) no witness that a NO answer to a query in $w_0$ is incorrect can be revealed
by any legal restriction extending $\rho$.
\item After applying $\rho$ and substituting the bits of $v_0$, $w_0$ for the variables
corresponding to $v,w$, the retraction WPHP axioms become essentially ``TRUE''.
Then a Prover-Adversary argument similar to one showing that CPLS has no small
treelike $\mathrm{Res(\log)}$ refutation suffices to show that neither does
$\mathrm{CPLS}{\upharpoonright}_\rho$ together with the retraction WPHP axioms.
\end{enumerate}
  
Some questions concerning the relationship of $\APC_2$ to other bounded
arithmetic theories and to propositional systems remain open. For example:

\begin{open}
Does $T^2_2$ imply $\APC_2$?\todo{If we decide this is worth stating, are we bold enough to conjecture a NO answer?}
\end{open}

\begin{open}
Is there a more natural propositional proof system corresponding to $\APC_2$. For instance,
do the propositional translations of $\forall \Sigma^b_1$ consequences of $\APC_2$
have quasipolynomial-size refutations in the ``random resolution'' system of \cite{RRR}
or even in its polylog-width fragment?
\end{open} 




}

\end{document}